\documentclass[10pt]{amsart}
\usepackage{amssymb}
\usepackage{amscd}
\usepackage[all]{xy}
\usepackage[colorlinks=true, linkcolor=red,
urlcolor=blue]{hyperref}

\numberwithin{equation}{section}

\def\today{\number\day\space\ifcase\month\or
 January\or February\or
   March\or April\or May\or June\or
    July\or August\or September\or
   October\or November\or December\fi\
     \number\year}

\theoremstyle{definition}
\newtheorem{thm}{Theorem}[section]
\newtheorem{lem}[thm]{Lemma}
\newtheorem{prp}[thm]{Proposition}
\newtheorem{dfn}[thm]{Definition}
\newtheorem{cor}[thm]{Corollary}
\newtheorem{cnj}[thm]{Conjecture}

\newtheorem{rmk}[thm]{Remark}
\newtheorem{ntn}[thm]{Notation}
\newtheorem{eme}[thm]{Example}

\newtheorem{qst}[thm]{Question}

\newcommand{\beq}{\begin{equation}}
\newcommand{\eeq}{\end{equation}}
\newcommand{\beqr}{\begin{eqnarray*}}
\newcommand{\eeqr}{\end{eqnarray*}}
\newcommand{\bal}{\begin{align*}}
\newcommand{\eal}{\end{align*}}
\newcommand{\bei}{\begin{itemize}}
\newcommand{\eei}{\end{itemize}}
\newcommand{\limi}[1]{\lim_{{#1} \to \infty}}

\newcommand{\dt}{\delta}
\newcommand{\ep}{\varepsilon}

\newcommand{\ta}{\tau}

\newcommand{\Q}{{\mathbb{Q}}}

\newcommand{\N}{{\mathbb{Z}}_{> 0}}

\pagenumbering{arabic}

\newcommand{\Cu}{{\operatorname{Cu}}}
\newcommand{\Cone}{{\operatorname{C}}}
\newcommand{\F}{{\operatorname{F}}}

\newcommand{\Tr}{{\operatorname{Tr}}}

\newcommand{\Prim}{{\operatorname{Prim}}}
\newcommand{\diag}{{\operatorname{diag}}}

\newcommand{\rank}{{\operatorname{rank}}}

\newcommand{\drr}{{\operatorname{drr}}}
\newcommand{\QT}{{\operatorname{QT}}}
\newcommand{\T}{{\operatorname{T}}}
\newcommand{\W}{{\operatorname{W}}}

\newcommand{\Proj}{{\operatorname{Proj}}}

\newcommand{\rc}{{\operatorname{rc}}}

\newcommand{\cH}{{\mathcal{H}}}

\newcommand{\cK}{{\mathcal{K}}}

\newcommand{\dirlim}{\varinjlim}
\newcommand{\invlim}{\varprojlim}
\newcommand{\Mi}{M_{\infty}}

\newcommand{\tfae}{the following are equivalent}

\newcommand{\ca}{C*-algebra}



\newcommand{\I}{\infty}

%
\newcommand{\Lem}[1]{Lemma~\ref{#1}}
\newcommand{\Def}[1]{Definition~\ref{#1}}

\newcommand{\Prp}[1]{Proposition~\ref{#1}}

\newcommand{\Rmk}[1]{Remark~\ref{#1}}

\title[The radius of comparison of the
tensor product]{The radius of comparison of the
tensor product of a C*-algebra with $C (X)$}   

\author{Mohammad B. Asadi}

\address{School of Mathematics, Statistics and Computer Science,
College of Science, University of Tehran, Tehran, Iran.}

\email[]{mb.asadi@ut.ac.ir}

\author{M.~Ali Asadi-Vasfi}

\address{School of Mathematics, Statistics and Computer Science,
College of Science, University of Tehran, Tehran, Iran.}

\email[]{Asadi.ali@ut.ac.ir}

\date{\today}

\subjclass[2010]{Primary 46L55;
 Secondary 19K14; 46L80.}

\begin{document}
\begin{abstract}
Let $X$ be a compact metric space, let $A$ be  a unital AH algebra
with large matrix sizes, and let $B$ be  a stably finite  unital
\ca.
 Then
we give a lower bound for the radius of comparison of $C(X) \otimes B$ and
prove that
  the dimension-rank ratio satisfies
$\drr (A) = \drr \left(C(X)\otimes A\right)$.
We also give a class of unital AH algebras $A$ with $\rc \left(C(X) \otimes A\right) = \rc (A)$.
We further give a class of stably finite exact $\mathcal{Z}$-stable
 unital C*-algebras with nonzero radius of comparison.
\end{abstract}
\maketitle

\section{Introduction}
The radius of comparison of a C*-algebra, based on the Cuntz semigroup, and the dimension-rank ratio of an AH algebra
are numerical invariants
which were introduced
in \cite{Tom06} to study exotic examples of simple amenable
C*-algebras that are not $\mathcal{Z}$-stable. Sometimes there is a tight relationship between them. For example,
it was shown in Corollary~4.3 of \cite{Tom06} that
 $\rc(A)=0$ if and only if $\drr (A)=0$ whenever $A$ is  a simple infinite-dimensional real rank zero unital AH algebra.

The  Cuntz semigroup plays a crucial role in the Elliott program for the classification of C*-algebras \cite{CE08, ET08, Tom08}. 
See~\cite{APT11, CEI, TT15} for many aspects of the Cuntz semigroup.
It is generally  complicated and  large.
For simple nuclear \ca{s},
the classifiable ones are those whose Cuntz semigroups
are easily  understood (Section~5 of \cite{APT11}).
With the near completion of the Elliott program,
nonclassifiable \ca{s} receive more attention (see \cite{AGP19, HP19, Ph16})
and the Cuntz semigroup is the main additional available invariant.

Let $X$ be a compact metric space.
 The covering
dimension of $X$ is denoted by $\dim (X)$ and
the cohomological dimension with
rational coefficients is denoted by
$\dim_{\mathbb{Q}} (X)$.
For many results about the Cuntz semigroup  of $C(X)$ when $\dim (X) \leq 3$,
we refer to  the work of Robert and Tikusis~\cite{RT11}.
In the commutative setting, it is well known that the radius of comparison of $C(X)$ is dominated by
$\frac{1}{2}\dim (X)$ \cite{BRTTW12, EN13}.
For this reason, comparison theory can be viewed as a non-commutative dimension theory.
 This fact and the main result of~\cite{T11} are part of our motivation to wonder about an upper bound for
 the radius of comparison of C*-algebras of the form $C(X, A)$,
where $A$ is a unital C*-algebra.
The following conjecture is proposed by N. Christopher Phillips.
\begin{cnj}\label{Phi_Conjecture}
Let $A$ be a  stably finite unital C*-algebra and let $X$ be a compact metric space. Then
\begin{equation} \label{Eq_Con_191116}
\rc (A) \leq\rc \left( C(X) \otimes A \right) \leq \frac{1}{2}\dim (X)  + \rc (A) +1.
\end{equation}
\end{cnj}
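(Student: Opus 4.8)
The plan is to prove the two inequalities $\rc(A) \le \rc(C(X) \otimes A)$ and $\rc(C(X) \otimes A) \le \tfrac12 \dim(X) + \rc(A) + 1$ by rather different means: the lower bound is comparatively soft, while the upper bound is the real content.

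For the lower bound, I would use that $A$ sits inside $C(X) \otimes A \cong C(X, A)$ as the constant functions and that, for any base point $x_0 \in X$, the point evaluation $\ev_{x_0} \colon C(X, A) \to A$ is a unital surjective $*$-homomorphism. Since Cuntz subequivalence passes through $*$-homomorphisms, it is enough to show that whenever $a, b \in (A \otimes \mathcal{K})_+$ satisfy $d_\tau(a) + \rc(C(X) \otimes A) + \varepsilon < d_\tau(b)$ for every $\tau \in \QT(A)$, the constant elements $1 \otimes a, 1 \otimes b$ satisfy the analogous strict inequality over $C(X, A)$; applying $\ev_{x_0} \otimes \id_{\mathcal{K}}$ to the resulting subequivalence $1 \otimes a \precsim 1 \otimes b$ then gives $a \precsim b$ in $A \otimes \mathcal{K}$. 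For the inequality over $C(X, A)$ I would first record a disintegration lemma: every normalized $\rho \in \QT(C(X, A))$ has the form $\rho(f) = \int_X \rho_x(f(x)) \, d\mu(x)$ for a Borel probability measure $\mu$ on $X$ and a measurable field $(\rho_x)_{x \in X}$ in $\QT(A)$ (standard for traces; when $A$ is exact all quasitraces are traces by Haagerup's theorem, so this is the classical statement, and the general case is a routine adaptation). Then monotone convergence applied to $n \mapsto \rho\big((1 \otimes a)^{1/n}\big) = \int_X \rho_x(a^{1/n})\, d\mu(x)$ gives $d_\rho(1 \otimes a) = \int_X d_{\rho_x}(a)\, d\mu(x)$, and integrating the fiberwise inequality $d_{\rho_x}(a) + \rc(C(X)\otimes A) + \varepsilon < d_{\rho_x}(b)$ over the probability measure $\mu$ yields $d_\rho(1 \otimes a) + \rc(C(X) \otimes A) + \tfrac{\varepsilon}{2} < d_\rho(1 \otimes b)$. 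Letting $\varepsilon \downarrow 0$ finishes this half.

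For the upper bound, the shape of the constant $\tfrac12 \dim(X) + \rc(A) + 1$ suggests splitting the comparison into a fiberwise part governed by $\rc(A)$ and a ``bundle over $X$'' part governed by $\tfrac12 \dim(X) + 1$, in parallel with the bound $\rc(C(X)) \le \tfrac12 \dim(X)$ of \cite{BRTTW12, EN13}. Given $a, b \in (C(X,A)\otimes\mathcal{K})_+$ with $d_\rho(a) + \tfrac12 \dim(X) + \rc(A) + 1 + \varepsilon < d_\rho(b)$ for all $\rho \in \QT(C(X,A))$, I would: (i) choose a finite open cover of $X$ whose nerve has dimension $\le \dim(X)$, together with a subordinate partition of unity; (ii) on each member of the cover use $\rc(A)$ to produce a \emph{continuous} field of fiberwise Cuntz subequivalences $a(x) \precsim b(x)$ with uniform excess rank at least $\rc(A) + \varepsilon$; and (iii) patch these local data over the $k$-skeleta of the nerve, paying roughly one unit of rank per gluing step, so that the cost incurred over $X$ totals $\le \tfrac12 \dim(X) + 1$. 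When $A$ is an AH algebra one can run a more concrete version of (i)--(iii) directly at the level of the building blocks $M_{n}(C(Y))$ of $A$, reducing to comparison of vector bundles over $X \times Y$, where the classical bundle-splitting theorems --- in the sharp form involving $\dim_{\mathbb{Q}}$ --- supply exactly the constants $\tfrac12$ and $1$.

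The hard part will be step (ii). A Cuntz subequivalence $a(x) \precsim b(x)$ is witnessed only by an approximating sequence of elements, not by a single one, so choosing such witnesses continuously across $X$ --- or boundedly and measurably, then repairing continuity at a controlled cost --- while keeping the rank bookkeeping uniform, is delicate, and this is exactly where the non-commutativity of $A$ forces the extra ``$+1$'' beyond the naive subadditive guess $\tfrac12 \dim(X) + \rc(A)$. For $A = \mathbb{C}$ this is the classical theory of vector bundles; for general stably finite $A$ it seems to require either a genuine continuous-field refinement of the comparison results of \cite{RT11} or a sufficiently robust approximation scheme, and one must further verify that the two error terms genuinely add rather than compound. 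I expect the full conjecture to follow only once such a continuous-field comparison theory is available; the special cases established in this paper correspond to settings in which this globalization can be carried out by hand.
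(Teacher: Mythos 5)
The statement you are proving is stated in the paper as a \emph{conjecture}, and the paper does not prove it in full: it establishes the left-hand inequality for every stably finite unital $A$ and every compact metric space $X$ (Proposition~\ref{Prp.20199.02.17}), proves the right-hand inequality only in special cases such as $A=M_n(C(Y))$ under cohomological-dimension hypotheses (Proposition~\ref{Prp.LS.Cojj.special}) or $\dim(X)=0$, and records the general right-hand inequality as an open problem (Question~\ref{Q_191215}). Your proposal is in the same position: the lower bound is essentially established, while the upper bound is a program rather than a proof. For the lower bound, your overall scheme (descend a Cuntz subequivalence of constant elements by evaluating the witnesses at a base point, as in Lemma~\ref{Lem.3.2019.02.17}) matches the paper, but your route to the trace inequality over $C(X,A)$ goes through a disintegration $\rho(f)=\int_X\rho_x(f(x))\,d\mu(x)$ of every normalized $2$-quasitrace on $C(X,A)$. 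That lemma is doing no work and is not as routine as you claim: for genuine $2$-quasitraces on a non-exact $A$ (which are not linear, only linear on commutative subalgebras) such a measurable-field disintegration is not a standard fact. It is also unnecessary: given $\theta\in\QT(C(X)\otimes A)$, the restriction $\tau_\theta(a)=\theta(1_{C(X)}\otimes a)$ is already a normalized $2$-quasitrace on $A$ with $d_{\tau_\theta}(c)=d_\theta(1_{C(X)}\otimes c)$, and applying the hypothesis to $\tau_\theta$ gives the needed strict inequality in one line. This is exactly what the paper does, and you should replace the disintegration by this observation.

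For the upper bound, your steps (i)--(iii) identify the right heuristic (fiberwise comparison at cost $\rc(A)$, plus a gluing cost controlled by $\dim(X)$), and you correctly locate the obstruction in step (ii): a Cuntz subequivalence $a(x)\precsim_A b(x)$ is witnessed only by a sequence, and there is no known way to select witnesses continuously (or measurably with controlled repair cost) over $X$ while keeping a uniform rank margin. But identifying the obstruction is not the same as overcoming it, and nothing in your sketch supplies the missing continuous-selection or patching mechanism, nor a verification that the two error terms add rather than compound. The paper's own partial result in this direction (Proposition~\ref{Prp_CXA_Cu_gener}) converts pointwise comparison $a(x)\precsim_A b(x)$ into the global statement $a\precsim_{C(X,A)}1_{M_{\dim(X)+1}}\otimes b$ by a partition-of-unity argument over a $\dim(X)$-decomposable cover; note that the price there is \emph{multiplicative} in the rank of $b$ (a factor of $\dim(X)+1$), not the additive $\tfrac12\dim(X)+1$ the conjecture demands, which is precisely why that lemma only settles the case $\dim(X)=0$. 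So the right-hand inequality remains open both in the paper and in your proposal, and your write-up should present it as such rather than as a proof.
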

The purpose
of this paper is to give some preliminary results related to
Conjecture~\ref{Phi_Conjecture}.

In Section~\ref{SEc_App_3}, we prove that the left-hand side  of (\ref{Eq_Con_191116})
is true for any stably finite unital C*-algebra $A$ and any compact metric space $X$.
(See Proposition~\ref{Prp.20199.02.17}.)
The right-hand side  of (\ref{Eq_Con_191116}) is also true for the case  $A =M_n(C(Y ))$ for $n\in \N$
with technical hypotheses on
$X$ and $Y$. (See Proposition~\ref{Prp.LS.Cojj.special}.)
But there are some difficulties  to prove it
 for a general \ca{} $A$ and a metric space $X$.
 So we decide to show that
 Conjecture~\ref{Phi_Conjecture} is valid at least for some choices of $A$  and $X$.

As an important special case, if $A$ is a residually stably finite $\mathcal{Z}$-stable unital C*-algebra,
then it is  shown in Proposition~\ref{Prp_Z_st_r} that $\rc (C(X)\otimes A)=0$.
To prove this, we apply Proposition~3.2.4(ii) of \cite{BRTTW12}.
Therefore,
finiteness and residual stable finiteness of $C(X, A)$ is one of the
starting points.

Another special case comes when $\dim (X)=0$.
(See Corollary~\ref{RC_Dim_0} and the discussion after it.)
To deal with this, we apply Proposition~\ref{Prp_CXA_Cu_gener} which states a
 connection between Cuntz comparison over $C(X, A)$ and $\dim (X)$.

It is definitely known that $\rc (A)=0$ for a stably finite exact simple $\mathcal{Z}$-stable unital C*-algebra $A$.
(See Corollary~4.6 of \cite{Rdm7}.)
One might naively expect that $\rc (A)=0$ for all stably finite exact (not necessarily simple)
$\mathcal{Z}$-stable unital C*-algebras.
To show that simplicity of $A$ is necessary,
we   exhibit a class of stably finite exact $\mathcal{Z}$-stable unital C*-algebras with nonzero radius of comparison.
(See Remark~\ref{Rm_191218}.)

In Section \ref{Sec_Drr}, we prove that if
$X$ is a compact metric space and $A$ is
a unital AH algebra with large matrix sizes, then
the dimension-rank  ratios of $A$ and $C(X,  A)$
are related by
$\drr (A) = \drr \left(C(X)\otimes A)\right).$
(See Proposition~\ref{Prp_drr_stabl}.)
There is an example in which
 $A$ doesn't have large matrix sizes and the equality fails. We further give a class of AH algebras $A$ with
$\rc \left(C(X)\otimes A)\right) = \rc (A)$.
So, (\ref{Eq_Con_191116}) is also true if $A$ is chosen from this class of AH algebras.

Along the way, we show the radius of comparison of a unital AH
algebra  is dominated by one half of its dimension-rank ratio.
(See Lemma~\ref{rc_half_drr}.) This result is known, but we have
not found it in the literature.

 At the end, we give many examples of
infinite-dimensional stably finite simple unital C*-algebras~$A$ for which
the radii of comparison of $A$ and $C(X)\otimes A$ are the same.
\subsection*{Acknowledgments}\label{Sec_Ackn}
Most of this research was done while the second author was
a visiting scholar at the University of Oregon
during the period March 2018 to
September 2019. He is thankful to that institution
for its hospitality. This paper will be part of the second author's Ph.D.\   dissertation.

Both authors would like to thank N. Christopher Phillips and Nasser
Golestani for several productive discussions. The second author also
thanks Aaron Tikuisis for his helpful comments about
Proposition~\ref{Prp_Z_st_r} and
Nathanial Brown, Marius D\v{a}d\v{a}rlat, Narutaka Ozawa, 
and S{\o}ren Eilers for answering a question about Definition~\ref{DPT09.Def.2.2}.
 Finally, both authors are grateful to the referee for a careful reading of the manuscript,
 for giving very valuable comments, and 
in particular for suggesting Corollary~\ref{Cor_AH_Cons1} and Corollary~\ref{Cor_AH_Cons2}.
\section{Preliminaries}
In this section,
we gather for easy reference
some information on the Cuntz semigroup, the dimension-rank ratio of AH algebras,
and the radius of comparison.
\begin{ntn}\label{N_9408_StdNotation}
We use the following standard notation.
If $A$ is a \ca, or if $A = M_{\infty} (B)$
for a C*-algebra~$B$, we write $A_{+}$ for the
set of positive elements of $A$.
The unitization of a C*-algebra $A$ is denoted by $A^+$.
(We add a new unit even if $A$ is already unital.)
 We let $\cK$ denote the algebra of compact operators on a separable
and infinite-dimensional Hilbert space $\cH$.
\end{ntn}
\subsection{The Cuntz semigroup}
Let $A$ be a \ca.
For $a, b \in M_{\infty} (A)_{+}$,
we say that $a$ is {\emph{Cuntz subequivalent to~$b$ in~$A$}},
written $a \precsim_{A} b$,
if there is a sequence $(c_n)_{n = 1}^{\infty}$ in $M_{\infty} (A)$
such that
$
\limi{n} c_n b c_n^* = a.
$
We say that $a$ and $b$ are {\emph{Cuntz equivalent in~$A$}},
written $a \sim_{A} b$,
if $a \precsim_{A} b$ and $b \precsim_{A} a$.
This relation is an equivalence relation,
and we write $\langle a \rangle_A$ for the equivalence class of~$a$.
We define $\W (A) = M_{\infty} (A)_{+} / \sim_A$,
together with the commutative semigroup operation
$\langle a \rangle_A + \langle b \rangle_A
 = \langle a \oplus b \rangle_A$
and the partial order
$\langle a \rangle_A \leq \langle b \rangle_A$
if $a \precsim_{A} b$.
We write $0$ for~$\langle 0 \rangle_A$.
We also define  $\Cu (A)= \W (A \otimes \cK)$.

The common notation for Cuntz subequivalence is $a \precsim b$ and it is originally from~\cite{Cun78}.
We include $A$ in the notation
because we want  notation for  the  Cuntz subequivalence
with respect to C*-algebras.

Part (\ref{PhiB.Lem_18_4_11}) of the following
is taken from Proposition 2.4  of~\cite{Ror92}.
 Part~(\ref{PhiB.Lem_18_4_10}) is Lemma 2.5(ii) of ~\cite{KR00}.
 Part~(\ref{PhiB.Lem_18_4_10.XV}) is Lemma~1.5 of \cite{Ph14}.
\begin{lem}\label{PhiB.Lem_18_4}
Let $A$ be a \ca.
\begin{enumerate}
\item\label{PhiB.Lem_18_4_11}
Let $a, b \in A_{+}$.
Then \tfae:
\begin{enumerate}
\item\label{PhiB.Lem_18_4_11.a}
$a \precsim_A b$.
\item\label{PhiB.Lem_18_4_11.b}
$(a - \ep)_{+} \precsim_A b$ for all $\ep > 0$.
\item\label{PhiB.Lem_18_4_11.c}
For every $\ep > 0$ there is $\dt > 0$ such that
$(a - \ep)_{+} \precsim_A (b - \dt)_{+}$.
\end{enumerate}
\item\label{PhiB.Lem_18_4_10}
Let $a, b \in A_{+}$ and let $\ep > 0$.
If $\| a - b \| < \ep$, then
$(a - \ep)_{+} \precsim_A b$.
\item\label{PhiB.Lem_18_4_10.XV}
Let $a, b \in A_+$ and let $\ep_1, \ep_2 \geq 0$. Then
\[
((a + b) - (\ep_1 + \ep_2))_+
\sim_A
(a - \ep_1)_+ + (b - \ep_2)_+
\precsim_A
(a - \ep_1)_+ \oplus (b - \ep_2)_+.
\]
\end{enumerate}
\end{lem}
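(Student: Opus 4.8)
The plan is to derive the whole lemma from two elementary facts about positive elements that logically precede it: (i) if $0 \le x \le y$ in a C*-algebra $A$, then $x \precsim_A y$ --- proved by taking $d_n = x^{1/2}(1/n + y)^{-1/2}$ in $A^+$ and checking $d_n y d_n^* \to x$, using that $t \mapsto (1+nt)^{-1}$ is operator monotone decreasing; and (ii) $x^* x \sim_A x x^*$ for $x \in M_{\infty}(A)$, whence $d y d^* \precsim_A y$ for every $d$ and $x^r \sim_A x$ for $x \in A_+$, $r > 0$; from (i) one also gets $(b - \dt)_+ \precsim_A b$ and $\ld b \sim_A b$ for $\ld > 0$. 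With these in hand I would prove part~(\ref{PhiB.Lem_18_4_10}) first, since part~(\ref{PhiB.Lem_18_4_11}) rests on it, and I would take care that the argument is not circular. Given $\|a - b\| < \ep$, put $\dt = \|a - b\|$, so $\ep > \dt$ and $a \le b + \dt\cdot 1$ in $A^+$. Let $h$ be the continuous function on $\spec(a)$ with $h(t) = (t-\ep)_+^{1/2}(t-\dt)^{-1/2}$ for $t > \dt$ and $h(t) = 0$ for $t \le \dt$; it is continuous (for $t$ near $\dt$ one has $t < \ep$, so $h(t) = 0$), it vanishes at $0$, and $\|h\| \le 1$. Functional calculus gives the \emph{exact} identity $h(a)\,(a - \dt\cdot 1)\,h(a) = (a-\ep)_+$; since $a - \dt\cdot 1 \le b$, conjugating by $h(a)$ yields $0 \le (a-\ep)_+ \le h(a)\,b\,h(a)$, and then (i) and (ii) give $(a-\ep)_+ \precsim_A h(a)\,b\,h(a) \precsim_A b$.

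For part~(\ref{PhiB.Lem_18_4_11}) I would run the cycle (\ref{PhiB.Lem_18_4_11.a})$\Rightarrow$(\ref{PhiB.Lem_18_4_11.c})$\Rightarrow$(\ref{PhiB.Lem_18_4_11.b})$\Rightarrow$(\ref{PhiB.Lem_18_4_11.a}). The last two are immediate: (\ref{PhiB.Lem_18_4_11.c})$\Rightarrow$(\ref{PhiB.Lem_18_4_11.b}) because $(b-\dt)_+ \precsim_A b$, and (\ref{PhiB.Lem_18_4_11.b})$\Rightarrow$(\ref{PhiB.Lem_18_4_11.a}) because choosing $c_n$ with $\|c_n b c_n^* - (a - 1/n)_+\| < 1/n$ forces $c_n b c_n^* \to a$. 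The content is (\ref{PhiB.Lem_18_4_11.a})$\Rightarrow$(\ref{PhiB.Lem_18_4_11.c}): given $\ep > 0$, pick $c$ with $\|c b c^* - a\| < \ep/3$ and apply part~(\ref{PhiB.Lem_18_4_10}) to get $(a - \ep/3)_+ \precsim_A c b c^*$; then choose $e$ with $e(cbc^*)e^*$ close to $(a - \ep/3)_+$ and $\dt > 0$ so small that $\|e\|^2\|c\|^2\dt$ is negligible, so that $(ec)(b-\dt)_+(ec)^*$ lies within $\ep/3$ of $(a-\ep/3)_+$; a second application of part~(\ref{PhiB.Lem_18_4_10}) gives $(a - 2\ep/3)_+ \precsim_A (ec)(b-\dt)_+(ec)^* \precsim_A (b-\dt)_+$, and $(a-\ep)_+ \precsim_A (a - 2\ep/3)_+$ by (i). This $\ep$--$\dt$ bookkeeping, which also uses $((t-s)_+ - r)_+ = (t-(s+r))_+$, is routine.

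For part~(\ref{PhiB.Lem_18_4_10.XV}), the comparison with the direct sum is the standard identity $(a-\ep_1)_+ + (b-\ep_2)_+ = c\big((a-\ep_1)_+ \oplus (b-\ep_2)_+\big)c^*$ with $c = (1,1) \in M_{1,2}(A^+)$. For the other comparison, write $a' = a - \ep_1$ and $b' = b - \ep_2$ (self-adjoint), so that $(a+b)-(\ep_1+\ep_2) = a' + b' \le a'_+ + b'_+$, and invoke the auxiliary fact that a self-adjoint $u$ with $u \le v$ for some $v \in A_+$ satisfies $u_+ \precsim_A v$. I would prove this by the device used for part~(\ref{PhiB.Lem_18_4_10}): for $\eta > 0$ and $g = (u-\eta)_+^{1/2}$ one has $g u g = (u-\eta)_+^2 + \eta(u-\eta)_+$, so $0 \le (u-\eta)_+^2 \le g u g \le g v g$, hence $(u-\eta)_+ \sim_A (u-\eta)_+^2 \precsim_A g v g \precsim_A v$; letting $\eta \to 0$ (or by part~(\ref{PhiB.Lem_18_4_11})) gives $u_+ \precsim_A v$. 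Applied with $u = a'+b'$ and $v = a'_+ + b'_+$, this yields $((a+b)-(\ep_1+\ep_2))_+ \precsim_A (a-\ep_1)_+ + (b-\ep_2)_+$.

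The one step that really needs care, and which I expect to be the main obstacle, is keeping the proof of part~(\ref{PhiB.Lem_18_4_10}) non-circular: the naive route rewrites $(a-\ep)_+$ as something only \emph{approximately} equal to a conjugate of $b$ and then wants to quote part~(\ref{PhiB.Lem_18_4_10}) itself. Choosing $h$ so that $h(a)(a-\dt\cdot 1)h(a) = (a-\ep)_+$ holds exactly converts $a - \dt\cdot 1 \le b$ into the genuine operator inequality $(a-\ep)_+ \le h(a)bh(a)$, after which only facts (i) and (ii) and $\ep$--$\dt$ accounting remain.
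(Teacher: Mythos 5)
Your argument is essentially a self-contained reconstruction of the standard proofs from the sources the paper merely cites (Proposition~2.4 of \cite{Ror92}, Lemma~2.5(ii) of \cite{KR00}, and Lemma~1.5 of \cite{Ph14}); the paper itself gives no proof, so there is nothing to diverge from. Parts~(\ref{PhiB.Lem_18_4_11}) and~(\ref{PhiB.Lem_18_4_10}) are correct as you set them up: the exact identity $h(a)(a-\dt\cdot 1)h(a)=(a-\ep)_+$ with $h(t)=(t-\ep)_+^{1/2}(t-\dt)^{-1/2}$ is precisely how one avoids the circularity you worry about, and the $\ep$--$\dt$ bookkeeping in (\ref{PhiB.Lem_18_4_11.a})$\Rightarrow$(\ref{PhiB.Lem_18_4_11.c}) closes as you indicate. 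Two cosmetic points: in the row-vector trick for part~(\ref{PhiB.Lem_18_4_10.XV}) you should conjugate by $d=\big((a-\ep_1)_+^{1/2},(b-\ep_2)_+^{1/2}\big)\in M_{1,2}(A)$ rather than by $(1,1)\in M_{1,2}(A^+)$, so that $dd^*=(a-\ep_1)_++(b-\ep_2)_+$ and $d^*d\leq \left\|(1,1)\right\|^2\big((a-\ep_1)_+\oplus(b-\ep_2)_+\big)$ keeps everything inside $M_\infty(A)$; and your fact~(i) should be stated with $d_n=x^{1/2}(1/n+y)^{-1/2}\in A$ (it lies in $A$ because $A$ is an ideal in $A^+$).

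The one substantive discrepancy is that part~(\ref{PhiB.Lem_18_4_10.XV}) as printed asserts a Cuntz \emph{equivalence} $((a+b)-(\ep_1+\ep_2))_+\sim_A (a-\ep_1)_++(b-\ep_2)_+$, and you prove only the subequivalence $\precsim_A$. You could not have done otherwise: the reverse subequivalence is false in general. Take $A=\mathbb{C}$, $a=1$, $b=0$, $\ep_1=0$, $\ep_2=1$; then $(a-\ep_1)_++(b-\ep_2)_+=1$ while $((a+b)-(\ep_1+\ep_2))_+=0$, and $1\not\precsim_{\mathbb{C}}0$. The cited source, Lemma~1.5 of \cite{Ph14}, asserts only $\precsim_A$ in both places, and that weaker form is all that is used later (in the proof of Proposition~\ref{Prp_CXA_Cu_gener}). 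So your proof establishes the correct statement; the $\sim_A$ in the display is a misprint that should be corrected to $\precsim_A$, and you should say explicitly that this is what you are proving rather than leaving the equivalence silently unaddressed.
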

\subsection{The dimension-rank ratio of an AH algebra}
Part~(\ref{DPT09.Def.2.2.1}) and Part~(\ref{DPT09.Def.2.2.2})
of the following definition are taken from Definition~1.1 of \cite{Tom06}.
Part~(\ref{DPT09.Def.2.2.5}) is defined by us to simplify notation.
What we call strictly homogeneous here was called homogeneous  in
\cite{DPT09, Tom06}. 
We want to avoid conflict with Definition~IV.1.4.1 of \cite{BlEC}.
\begin{dfn}\label{DPT09.Def.2.2}
A C*-algebra is said to be \emph{strictly homogeneous} if it is isomorphic to
$p \big (C(X)\otimes \cK \big )p$ for a compact Hausdorff space $X$ and
a projection of constant rank $p \in C(X)\otimes \cK$.
A \emph{strictly semihomogeneous} \ca ~is
a finite direct sum of strictly homogeneous \ca{s}.
\begin{enumerate}
\item\label{DPT09.Def.2.2.1}
An \emph{approximately homogeneous} (AH) \ca{}
is a direct limit
$$A =\dirlim (A_{j}, \psi_j)$$
where  $A_j$ is strictly semihomogeneous for each $j$.
\item\label{DPT09.Def.2.2.2}
Let $A =\dirlim (A_{j}, \psi _j)$
be a unital (i.e., both $A_j$ and
$\psi_j  \colon A_j \to  A_{j+1}$
 are unital for every $ j \in  \N$)
AH algebra, where
$$A_j  = \bigoplus_{l=1}^{m_j}
 p_{j,l} \big ( C(X_{j,l})\otimes \cK  \big) p_{j,l}$$
for compact Hausdorff spaces $X_{j,l}$, projections
 $p_{j,l} \in  C(X_{j,l})\otimes \cK $,
  and natural numbers $m_j$. Define
$\psi_{jk} = \psi_{j-1} \circ
\psi_{j-2} \circ \cdots \circ \psi_k$,
and write $\psi_{j \infty} \colon A_j \to  A$
 for the canonical map. We consider
this collection of objects and maps as
a decomposition for $A$. If  $\psi_j$ is  injective for every $j$,
 then we describe this collection as an injective
decomposition.
\item\label{DPT09.Def.2.2.5}
If a unital  AH algebra $A$ admits a decomposition as in (\ref{DPT09.Def.2.2.2}) for which
\[
\lim_{j \to \infty} \min_{1 \leq l \leq m_j} \big( \rank (p_{j,l}) \big) = \infty,
\]
then we say that $A$
has \emph{large matrix sizes}.
\end{enumerate}
\end{dfn}
\begin{dfn}[Definition~2.1 of \cite{Tom06}]
Let $A$ be a unital AH algebra.
 The \emph{dimension-rank ratio} of $A$, denoted $\drr (A)$,
is the infimum of the set of strictly positive reals
 $r$ such that $A$ has a decomposition which satisfies
$$\limsup \limits_{j\to \infty} \max_{1 \leq l \leq m_j}
 \left( \frac{\dim (X_{j,l})}{\rank (p_{j,l})} \right)=r, $$
whenever this set is not empty, and $\infty$ otherwise.
\end{dfn}
 Part (\ref{Prp2.2.Tom06.a}), Part (\ref{Prp2.2.Tom06.b}), and Part (\ref{Prp2.2.Tom06.c}) of
 the following lemma are parts of Proposition~2.2 of \cite{Tom06} and Part (\ref{Prp2.2.Tom06.d}) is
 Corollary~2.4 of \cite{Tom06}.
\begin{lem} \label{Prp2.2.Tom06}
Let $A$ and  $B$ be unital AH algebras and let $I$ a closed ideal of $A$. Then:
\begin{enumerate}
\item \label{Prp2.2.Tom06.a}
 $\drr( A / I ) \leq \drr(A)$.
\item \label{Prp2.2.Tom06.b}
 $\drr( A  \oplus B ) = \max \big(\drr(A), \drr(B)\big)$.
\item \label{Prp2.2.Tom06.c}
$\drr (A \otimes M_k) \leq \frac{1}{k} \drr(A)$ for $k\in \N$.
\item \label{Prp2.2.Tom06.d}
Let $A = \dirlim  (A_j ,\varphi_j )$ be a unital AH algebra where each $A_j$ is semihomogeneous.
Then
$\drr (A) \leq  \liminf_{j \to \infty} \drr (A_j)$.
\end{enumerate}
\end{lem}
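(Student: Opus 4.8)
The plan is to handle the four parts in the order (\ref{Prp2.2.Tom06.a}), (\ref{Prp2.2.Tom06.c}), (\ref{Prp2.2.Tom06.b}), (\ref{Prp2.2.Tom06.d}); the first three are soft manipulations of decompositions, while (\ref{Prp2.2.Tom06.d}) carries the real content.

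For (\ref{Prp2.2.Tom06.a}), fix a decomposition $A=\dirlim(A_j,\psi_j)$ with $A_j=\bigoplus_{l=1}^{m_j}p_{j,l}\big(C(X_{j,l})\otimes\cK\big)p_{j,l}$ whose associated ratio $r=\limsup_{j}\max_{l}\big(\dim(X_{j,l})/\rank(p_{j,l})\big)$ is an arbitrary member of the set defining $\drr(A)$. Put $I_j=\psi_{j\infty}^{-1}(I)$, so that $A/I=\dirlim(A_j/I_j,\ov\psi_j)$ with the induced unital connecting maps. An ideal of a finite direct sum is the direct sum of ideals of the summands, and the closed ideals of a strictly homogeneous algebra $p\big(C(X)\otimes\cK\big)p$ of constant rank correspond to the open sets $U\S X$, with quotient $p|_{X\SM U}\big(C(X\SM U)\otimes\cK\big)p|_{X\SM U}$. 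Hence each $A_j/I_j$ is again strictly semihomogeneous, built over certain closed subsets $Y_{j,l}\S X_{j,l}$ with the same constant ranks; since $\dim(Y_{j,l})\le\dim(X_{j,l})$, the resulting decomposition of $A/I$ has ratio at most $r$. Taking the infimum over all admissible $r$ gives $\drr(A/I)\le\drr(A)$.

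For (\ref{Prp2.2.Tom06.c}), the same setup gives $A\otimes M_k=\dirlim(A_j\otimes M_k,\psi_j\otimes\id)$, and, fixing an isomorphism $\cK\otimes M_k\cong\cK$, one identifies $A_j\otimes M_k$ with $\bigoplus_l q_{j,l}\big(C(X_{j,l})\otimes\cK\big)q_{j,l}$ over the same spaces $X_{j,l}$, where $\rank(q_{j,l})=k\,\rank(p_{j,l})$. The ratio of this decomposition of $A\otimes M_k$ is exactly $\tfrac1k r$, and passing to the infimum yields $\drr(A\otimes M_k)\le\tfrac1k\drr(A)$. For (\ref{Prp2.2.Tom06.b}), the bound $\drr(A\oplus B)\le\max\big(\drr(A),\drr(B)\big)$ follows by taking the blockwise direct sum of a decomposition of $A$ and one of $B$ and using $\limsup_j\max(x_j,y_j)=\max(\limsup_j x_j,\limsup_j y_j)$; the reverse inequality is immediate from (\ref{Prp2.2.Tom06.a}) applied to the quotient maps $A\oplus B\to A$ and $A\oplus B\to B$.

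For (\ref{Prp2.2.Tom06.d}), first pass to a subsequence along which $\drr(A_j)$ converges to $\liminf_j\drr(A_j)=:L$; a cofinal subsystem has the same limit, so after relabelling we may assume $\drr(A_j)\to L$ and, discarding finitely many indices, that $\drr(A_j)<L+\ep$ for a fixed $\ep>0$ and all $j$. Since $\drr(A_j)<L+\ep$, each $A_j$ admits a decomposition $A_j=\dirlim_n(B_{j,n},\theta_{j,n})$ of ratio $<L+\ep$, and after discarding finitely many $n$ every $B_{j,n}$ has ratio $<L+\ep$. It remains to reassemble the iterated direct limit $A=\dirlim_j\dirlim_n B_{j,n}$ into a single decomposition of $A$ whose building blocks are chosen among the $B_{j,n}$. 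I would do this by an Elliott-type approximate intertwining: choose a dense sequence in $A$ drawn from the canonical images of the $A_j$, then inductively select indices and unital $*$-homomorphisms $B_{j_k,n(k)}\to B_{j_{k+1},n(k+1)}$ which, on larger and larger finite sets and with summable errors, intertwine the canonical maps $B_{j,n}\to A_j\to A$; this produces $A\cong\dirlim_k B_{j_k,n(k)}$, a decomposition of $A$ of ratio at most $L+\ep$, whence $\drr(A)\le L+\ep$. Letting $\ep\downarrow0$ gives $\drr(A)\le L$. I expect this reassembly to be the only genuine obstacle: turning the approximate containments of images of consecutive building blocks into honest connecting homomorphisms is precisely what the intertwining machinery is built for, and if needed one may first telescope the given system (replacing $A_j$ by its image in $A$, still semihomogeneous by (\ref{Prp2.2.Tom06.a})) so that its connecting maps are injective.
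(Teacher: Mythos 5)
The paper itself gives no proof of this lemma: parts (\ref{Prp2.2.Tom06.a})--(\ref{Prp2.2.Tom06.c}) are quoted from Proposition~2.2 of \cite{Tom06} and part (\ref{Prp2.2.Tom06.d}) from Corollary~2.4 of \cite{Tom06}, so you are supplying an argument the authors omitted. Your proofs of (\ref{Prp2.2.Tom06.a}), (\ref{Prp2.2.Tom06.b}), and (\ref{Prp2.2.Tom06.c}) are correct and are essentially the standard ones: quotients of a constant-rank strictly homogeneous algebra correspond to restriction to a closed subset, which can only decrease $\dim(X_{j,l})$ while preserving $\rank(p_{j,l})$; tensoring with $M_k$ multiplies the ranks by $k$; and the direct-sum formula follows from $\limsup_j\max(x_j,y_j)=\max(\limsup_j x_j,\limsup_j y_j)$ together with part (\ref{Prp2.2.Tom06.a}).

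Part (\ref{Prp2.2.Tom06.d}) is where there is a genuine gap, and also where you have made the problem much harder than it is. Your plan is to re-decompose each $A_j$ as a limit of blocks $B_{j,n}$ and then reassemble $A$ from the $B_{j,n}$ by an Elliott-type approximate intertwining. The reassembly step does not go through: to produce an honest connecting map $B_{j_k,n(k)}\to B_{j_{k+1},n(k+1)}$ you must approximately factor a homomorphism from $B_{j_k,n(k)}$ into the direct limit $A_{j_{k+1}}=\dirlim_n B_{j_{k+1},n}$ through a finite stage, and that is precisely weak semiprojectivity, which fails for homogeneous algebras over spaces of dimension at least two; approximate containment of images cannot in general be upgraded to honest homomorphisms here, so the ``intertwining machinery'' has nothing to run on. Fortunately none of this is needed. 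The hypothesis says each $A_j$ is (strictly) semihomogeneous, i.e.\ already a finite direct sum $\bigoplus_l p_{j,l}\big(C(X_{j,l})\otimes\cK\big)p_{j,l}$, so the given inductive system is itself a decomposition of $A$ in the sense of Definition~\ref{DPT09.Def.2.2}(\ref{DPT09.Def.2.2.2}). By Theorem~2.3 of \cite{Tom06} (see Remark~\ref{Rmk_190903}) together with part (\ref{Prp2.2.Tom06.b}), one has $\drr(A_j)=\max_l \dim(X_{j,l})/\rank(p_{j,l})$, so the ratio of this decomposition is $\limsup_j \drr(A_j)$; telescoping along a subsequence realizing $\liminf_j\drr(A_j)$, which changes neither the limit algebra nor the semihomogeneity of the stages, yields a decomposition of ratio exactly $\liminf_j\drr(A_j)$, whence $\drr(A)\le\liminf_j\drr(A_j)$.
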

\subsection{Radius of comparison}
The set of normalized 2-quasitraces on a unital C*-algebra $A$ is denoted by $\QT (A)$. 
By the discussion after Proposition II.4.6 of~\cite{BH82},
we have  $\QT ( A )\neq \varnothing$
for every stably finite unital C*-algebra $A$.
See \cite{BH82} for more details about 2-quasitraces.
\begin{dfn}
[Definition~12.1.7 of~\cite{GKPT18}]
\label{D_9422_dtau_dfn}
Let $A$ be a unital \ca,
and let $\tau \in \QT (A)$.
Define $d_{\tau} \colon \Mi (A)_{+} \to [0, \infty)$
by
$d_{\tau} (a) = \lim_{n \to \infty} \tau (a^{1/n})$.
We also use the same notation for the corresponding functions
on $\Cu (A)$ and $\W (A)$.
\end{dfn}

The following is Definition~6.1 of~\cite{Tom06},
except that we allow $r = 0$ in~(\ref{rc_dfn.a}).
This change makes no difference.

\begin{dfn}\label{rc_dfn}
Let $A$ be a stably finite unital C*-algebra.

\begin{enumerate}
\item\label{rc_dfn.a}
Let $r \in [0, \I)$.
We say that $A$ has {\emph{$r$-comparison}} if whenever
$a, b \in M_{\infty} (A)_{+}$ satisfy
$d_{\ta} (a) + r < d_{\ta} (b)$
for all $\ta \in \QT (A)$,
then $a \precsim_A b$.
\item\label{rc_dfn.b}
The {\emph{radius of comparison}} of~$A$, denoted
${\operatorname{rc}} (A)$, is
\[
\rc (A)
 = \inf \big( \big\{ r \in [0, \I) \colon
    {\mbox{$A$ has $r$-comparison}} \big\} \big)
\]
if it exists, and $\infty$ otherwise.
\end{enumerate}
\end{dfn}

If $A$ is simple,
then the infimum in \Def{rc_dfn}(\ref{rc_dfn.b})
is attained,
that is, $A$ has ${\operatorname{rc}} (A)$-comparison.
(See Proposition~6.3 of~\cite{Tom06}.)
\begin{thm}[Corollary~1.2 of \cite{EN13}]
\label{Thm_El_Ni_C(X)}
Let $X$ be  a compact metrizable space with $\dim_{\mathbb{Q}} (X) = \dim (X)$. Then:
\begin{enumerate}
\item\label{Thm_El_Ni_C(X).a}
If $\dim (X)$ is even, then
$ \frac{\dim (X)}{2} -2 \leq \rc \big( C(X) \big) \leq \max \Big(0, \frac{\dim (X) }{2} -1\Big)$.
\item\label{Thm_El_Ni_C(X).b}
If $\dim (X)$ is odd, then
$ \rc \big( C(X) \big) = \max \Big(0,  \frac{\dim (X) -1}{2} -1\Big)$.
\end{enumerate}
\end{thm}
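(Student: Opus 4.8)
The plan is to prove the upper and lower bounds separately, in each case translating the C*-algebraic statement into the language of vector bundles over $X$. The starting point is the dictionary for $C(X)$: since $C(X)$ is abelian, $\QT(C(X))$ is exactly the set of Borel probability measures on $X$, and for $a \in \Mi(C(X))_{+}$ one has $d_{\mu}(a) = \int_{X} \rank(a(x))\, d\mu$, where $x \mapsto \rank(a(x))$ is a lower semicontinuous $\Nz$-valued function. Taking $\mu$ to be a point mass shows that the hypothesis ``$d_{\tau}(a) + r < d_{\tau}(b)$ for all $\tau \in \QT(C(X))$'' forces $\rank(a(x)) + r < \rank(b(x))$ for every $x$; since the ranks are integers this pointwise gap is automatically uniform and conversely implies the hypothesis. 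Thus, modulo standard reductions, $r$-comparison for $C(X)$ becomes the assertion that a pointwise rank inequality with defect more than $r$ between positive elements of $\Mi(C(X))$ implies Cuntz subequivalence --- equivalently, a statement about when one vector bundle over $X$ embeds as a direct summand of another.

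For the upper bound I would first reduce to the case that $X$ is a finite CW complex of dimension $\dim(X)$: a general compact metrizable $X$ is an inverse limit of such complexes, $C(X)$ is the corresponding direct limit, and both $d_{\tau}$ and $\precsim$ behave well under direct limits via \Lem{PhiB.Lem_18_4}. On a finite complex, given $a, b$ with the rank gap above, I would use \Lem{PhiB.Lem_18_4}(\ref{PhiB.Lem_18_4_11}) to reduce to the case where $(a-\ep)_{+}$ and $(b-\dt)_{+}$ have ranges that are vector bundles over closed subsets of $X$, i.e.\ to comparing complex vector bundles $E, F$ with $\rank(E_{x}) + r < \rank(F_{x})$ for all $x$. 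The topological heart of the matter is the classical cancellation/stable-range theorem (equivalently Rieffel's bound on the connected stable rank of $C(X)$): a complex vector bundle of complex rank exceeding $\tfrac{1}{2}\dim(X)$ over a complex of dimension $\dim(X)$ splits off a trivial line bundle. Iterating this absorbs the slack and yields $E \precsim_{C(X)} F$. Tracking the exact rank threshold at which the splitting becomes available --- which depends on the parity of $\dim(X)$, since odd-degree cohomology does not contribute to the obstruction --- is what produces the precise constants, the $-1$, and the $\max(0,\,\cdot\,)$ in parts (\ref{Thm_El_Ni_C(X).a}) and (\ref{Thm_El_Ni_C(X).b}), and in particular the refinement of the upper bound in the odd case to $\tfrac{1}{2}(\dim(X)-1)-1$.

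For the lower bounds I would, for each admissible value of $\dim(X)$, exhibit a space $X$ with $\dim_{\Q}(X) = \dim(X)$ carrying projections $p, q \in \Mi(C(X))_{+}$ with $d_{\tau}(p) + r < d_{\tau}(q)$ for all $\tau$ but $p \not\precsim_{C(X)} q$, where $r$ is as large as the asserted bound. Concretely, take $p$ to correspond to a complex vector bundle $E$ over $X$ of rank close to $\tfrac{1}{2}\dim(X)$ whose characteristic class in the top-degree group $H^{\dim(X)}(X;\Q)$ is nonzero --- the hypothesis $\dim_{\Q}(X) = \dim(X)$ is precisely what guarantees nontrivial top rational cohomology and hence the existence of such a bundle --- and take $q$ to correspond to a trivial bundle of slightly larger constant rank, so that the rank inequality with defect $r$ holds. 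Then $p \precsim_{C(X)} q$ would force a stabilization of $E$ to embed into a trivial bundle of too small a rank, forcing the vanishing of its degree-$\dim(X)$ class, a contradiction. Hence $\rc(C(X)) \ge r$; optimizing the construction gives the even-case lower bound, and, together with the odd-case upper bound above, the exact value in the odd case.

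The step I expect to be the main obstacle is the lower-bound construction: one must produce, from the mere hypothesis $\dim_{\Q}(X)=\dim(X)$, vector bundles of rank as small as $\tfrac{1}{2}\dim(X)$ that are ``maximally nontrivial'' in the quantitative sense needed --- non-vanishing of a characteristic class in the very top degree --- and then convert this topological non-splitting into a failure of Cuntz subequivalence with a sharp defect. On the upper-bound side the delicate points are the passage from finite complexes to arbitrary compact metrizable $X$ and the careful bookkeeping of ranks and $\ep$-cut-downs required to bring the constant down to $\max(0,\tfrac{1}{2}\dim(X)-1)$ rather than something weaker.
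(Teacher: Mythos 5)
First, a point of order: the paper offers no proof of this statement --- it is quoted verbatim as Corollary~1.2 of \cite{EN13} --- so there is no internal argument to compare against, and your proposal has to be judged against the Elliott--Niu paper itself. The upper-bound half of your outline has broadly the right shape (translate $d_\tau$ into pointwise rank via point masses, reduce to finite complexes, and use obstruction-theoretic embedding of bundles whose ranks differ by more than $r$, with the parity of $\dim (X)$ entering through the vanishing of odd-degree obstruction groups), though the real content --- handling general positive elements, whose rank functions are only lower semicontinuous and which do not define vector bundles over all of $X$ --- is precisely what you defer to ``bookkeeping.''

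The genuine gap is in the lower bound, in two ways. First, the theorem asserts the lower bound for \emph{every} compact metrizable $X$ with $\dim_{\Q}(X)=\dim (X)$; your plan to ``exhibit a space $X$\dots carrying projections $p,q$'' would only show that \emph{some} space of each dimension attains it. Second, and more seriously, even for a fixed such $X$ the construction cannot run on projections: the hypothesis $\dim_{\Q}(X)=\dim (X)$ does \emph{not} guarantee nontrivial top-degree absolute cohomology $H^{\dim (X)}(X;\Q)$ --- it guarantees only that some \emph{relative} \v{C}ech group $\check{H}^{\dim (X)}(X,A;\Q)$ is nonzero for a suitable closed $A\subseteq X$. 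The cube $X=[0,1]^{d}$ satisfies the hypothesis, is contractible, carries only trivial vector bundles, and has vanishing reduced cohomology; yet the theorem gives $\rc\big(C([0,1]^{5})\big)=1$, and this is exactly the instance the present paper invokes after Corollary~\ref{RC_Dim_0}. So no pair of projections over $[0,1]^{d}$ can witness the failure of comparison; any correct argument (and this is how \cite{EN13} proceeds) must use non-projection positive elements whose rank functions jump across a closed set $A$, so that the obstruction lives in the relative group $\check{H}^{\dim (X)}(X,A;\Q)$ that the hypothesis actually controls. As written, your lower bound collapses on precisely the spaces to which the theorem is most often applied.
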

\subsection{Functionals}
Let $\Omega$ be a semigroup in the category $\mathbf{Cu}$ 
 which is given in Denfinition~4.1 of \cite{APT11}. A functional
on $\Omega$ is a map $\lambda \colon  \Omega \to [0, \infty]$ which is order preserving, additive, preserves suprema of
increasing sequences and satisfies $\lambda (0) = 0$. We denote the set of  functionals on $\Omega$ by  $\F(\Omega)$.
Let $A$ be  a unital \ca. The radius of comparison of $(\Cu (A)\, , \langle 1_A \rangle)$, denoted $r_A$, is
the infimum of the set of real numbers $r$ in $[0, \infty)$ such that, if whenever $a, b \in  M_{\infty}(A)_+$
satisfy $\lambda (\langle a \rangle_A) + r
\lambda (\langle 1_A \rangle_A) \leq \lambda (\langle b \rangle_A)$ for all $\lambda \in \F (\Cu (A))$,
then $a \precsim_A b$.

Since we allow
functionals on $\Cu(A)$ which take the value $\infty$ at the unit, it follows that
$r_A \leq \rc(A)$. For sufficiently finite C*-algebras, we have $r_A = \rc (A)$. 
\begin{dfn}
A  unital \ca{} is called \emph{residually stably finite} if
all of its quotients are stably finite.
\end{dfn}
\begin{prp}[Proposition~3.2.3 of \cite{BRTTW12}]
\label{BW.Prp.3.2.3}
Let $A$ be a residually stably finite unital \ca. Then $\rc (A) = r_A$.
\end{prp}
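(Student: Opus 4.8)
The inequality $r_A \le \rc(A)$ has already been observed above, so the plan is to prove $\rc(A) \le r_A$. It suffices to show that whenever $r \in [0, \I)$ has the property that $\lambda(\langle a \rangle_A) + r \lambda(\langle 1_A \rangle_A) \le \lambda(\langle b \rangle_A)$ for all $\lambda \in \F(\Cu(A))$ implies $a \precsim_A b$, then $A$ has $r$-comparison in the sense of \Def{rc_dfn}; taking the infimum over such $r$ then gives $\rc(A) \le r_A$. So fix such an $r$, let $a, b \in M_\infty(A)_+$ satisfy $d_\tau(a) + r < d_\tau(b)$ for all $\tau \in \QT(A)$, say with $b \in M_N(A)_+$, and aim to verify $\lambda(\langle a \rangle_A) + r \lambda(\langle 1_A \rangle_A) \le \lambda(\langle b \rangle_A)$ for every $\lambda \in \F(\Cu(A))$; the desired relation $a \precsim_A b$ follows. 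I would split the verification according to the value of $\lambda(\langle 1_A \rangle_A) \in [0, \I]$.

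If $\lambda(\langle 1_A \rangle_A) = 0$, then, since every positive element of a matrix algebra $M_M(A)$ is Cuntz-below $1_{M_M(A)}$ and so has class at most $M \langle 1_A \rangle_A$ in $\Cu(A)$, the functional $\lambda$ vanishes on both $\langle a \rangle_A$ and $\langle b \rangle_A$, and the inequality reads $0 \le 0$. If $0 < \lambda(\langle 1_A \rangle_A) < \I$, then by the standard identification of functionals on $\Cu(A)$ with lower semicontinuous $2$-quasitraces on $A \otimes \cK$ (together with~\cite{BH82}), the restriction of $\lambda$ to $\W(A)$ equals $\lambda(\langle 1_A \rangle_A)\, d_\tau$ for some $\tau \in \QT(A)$, whence
\[
\lambda(\langle a \rangle_A) + r \lambda(\langle 1_A \rangle_A) = \lambda(\langle 1_A \rangle_A)\big(d_\tau(a) + r\big) < \lambda(\langle 1_A \rangle_A)\, d_\tau(b) = \lambda(\langle b \rangle_A).
\]
This settles the cases in which $\lambda$ has finite value at $\langle 1_A \rangle_A$.

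The case $\lambda(\langle 1_A \rangle_A) = \I$ is the heart of the matter, and it is here that residual stable finiteness enters: one must show $\lambda(\langle b \rangle_A) = \I$. From the hypothesis, $d_\tau(b) > d_\tau(a) \ge 0$, hence $d_\tau(b) > 0$, for all $\tau \in \QT(A)$. I claim this forces $b$ to be full in $M_N(A)$: otherwise the closed ideal of $M_N(A)$ generated by $b$ equals $M_N(I)$ for a proper closed ideal $I \subsetneq A$, the quotient $A/I$ is unital and (being a quotient of $A$) stably finite, so $\QT(A/I) \ne \E$ by the discussion after Proposition~II.4.6 of~\cite{BH82}, and composing a normalized $2$-quasitrace on $A/I$ with $A \to A/I$ produces $\tau \in \QT(A)$ that annihilates the closed ideal generated by $b$, hence with $d_\tau(b) = 0$, a contradiction. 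Fullness of $b$ gives, by a standard estimate on Cuntz classes (approximate $1_{M_N(A)}$ by a finite sum $\sum_i x_i b y_i$, symmetrize to a positive invertible element, and apply \Lem{PhiB.Lem_18_4}), an $m \in \N$ with $\langle 1_A \rangle_A \le m \langle b \rangle_A$ in $\Cu(A)$; applying $\lambda$ gives $\I = \lambda(\langle 1_A \rangle_A) \le m \lambda(\langle b \rangle_A)$, so $\lambda(\langle b \rangle_A) = \I$ and the inequality holds trivially. This exhausts the cases, so $a \precsim_A b$ and $A$ has $r$-comparison.

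The step I expect to be the main obstacle is the ingredient behind the middle case --- the representation of functionals on $\Cu(A)$ by lower semicontinuous $2$-quasitraces on $A \otimes \cK$, for genuine $2$-quasitraces rather than traces --- which must be invoked or established with some care. The hypothesis of residual stable finiteness itself is used only in the last case, to force $b$ to be full by passing to a proper quotient; this is precisely what prevents the functionals that are infinite at $\langle 1_A \rangle_A$ from producing counterexamples.
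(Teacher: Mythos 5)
The paper states this result as Proposition~3.2.3 of \cite{BRTTW12} and gives no proof of its own, and your argument correctly reconstructs the proof from that source: the trichotomy on $\lambda(\langle 1_A \rangle_A)$, the identification of functionals finite at the unit with normalized $2$-quasitraces, and the use of residual stable finiteness to force $b$ to be full (so that $\lambda(\langle b \rangle_A) = \infty$) when $\lambda(\langle 1_A \rangle_A) = \infty$ are exactly the ingredients used there. The one external input you rightly flag---that every functional on $\Cu(A)$ arises as $d_\tau$ for a lower semicontinuous $2$-quasitrace $\tau$ on $A \otimes \cK$---is established in Section~2 of \cite{BRTTW12}, so nothing essential is missing.
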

Part~(\ref{Prp_Quot.a}) of the following proposition is a suitable version  of Proposition~6.2(iii) of \cite{Tom06} and
Part~(\ref{Prp_Quot.b}) is a generalization of Corollary~5.4 of \cite{Tom09}.
\begin{prp}\label{Prp_Quot}
Let $A$ be a residually stably finite unital C*-algebra.
\begin{enumerate}
\item\label{Prp_Quot.a}
For every closed ideal $I$ of $A$, we have
$\rc \big(A/I\big) \leq \rc (A)$.
\item\label{Prp_Quot.b}
If $A = \dirlim A_j$
where the homomorphisms of the inductive system are unital, then
$\rc (A) \leq \liminf_{j\to \infty} \rc (A_j).$
\end{enumerate}
\end{prp}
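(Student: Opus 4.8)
The plan is to argue throughout in terms of the functional radius of comparison $r_{\bullet}$ of the Cuntz semigroups involved, since quotients and inductive limits interact well with functionals, and to pass back to $\rc$ only at the end via \Prp{BW.Prp.3.2.3}. In both parts I would, after this reduction, use \Lem{PhiB.Lem_18_4}(\ref{PhiB.Lem_18_4_11}) to replace the desired relation $a \precsim b$ by the relations $(a - \ep)_+ \precsim b$, $\ep > 0$, which can be verified after approximating and after enlarging the ``target'' element.

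For part~(\ref{Prp_Quot.a}), a quotient of a residually stably finite algebra is again residually stably finite, so \Prp{BW.Prp.3.2.3} gives $\rc(A/I) = r_{A/I}$ and $\rc(A) = r_A$, and it suffices to prove $r_{A/I} \leq r_A$. Fix $r > r_A$ and, given $a, b \in M_{\infty}(A/I)_+$ with $\lambda(\langle a\rangle_{A/I}) + r\lambda(\langle 1_{A/I}\rangle_{A/I}) \leq \lambda(\langle b\rangle_{A/I})$ for all $\lambda \in \F(\Cu(A/I))$, show $(a - \ep)_+ \precsim_{A/I} b$ for each $\ep > 0$. Let $\pi \colon A \to A/I$ be the quotient map, lift $a, b$ to $\widetilde{a}, \widetilde{b} \in M_{\infty}(A)_+$, let $e_0$ be a strictly positive element of $I$ (assuming $I$ is $\sigma$-unital, as when $A$ is separable; in general use an approximate unit), and put $z = \sup_k \langle e_0^{\oplus k}\rangle \in \Cu(A)$, the largest element of $\Cu(I) \subseteq \Cu(A)$. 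Then $\Cu(\pi)(z) = 0$, while $\lambda(z) = \infty$ for every $\lambda \in \F(\Cu(A))$ not vanishing on $\Cu(I)$. The key inequality is
\[
\lambda\big(\langle (\widetilde{a} - \ep)_+\rangle_A\big) + r\,\lambda\big(\langle 1_A\rangle_A\big) \leq \lambda\big(\langle \widetilde{b}\rangle_A + z\big) \qquad \big(\lambda \in \F(\Cu(A))\big).
\]
Indeed, when $\lambda$ vanishes on $\Cu(I)$ it factors through $\Cu(\pi)$ and the inequality reduces to the hypothesis over $A/I$ together with $\lambda(\langle(\widetilde{a} - \ep)_+\rangle_A) \leq \lambda(\langle \widetilde{a}\rangle_A)$; otherwise the right-hand side is $\infty$. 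Since $A$ has $r$-comparison for $(\Cu(A), \langle 1_A\rangle)$ and $\langle \widetilde{b}\rangle_A + z = \sup_k \langle \widetilde{b} \oplus e_0^{\oplus k}\rangle_A$ is a supremum of classes of elements of $M_{\infty}(A)_+$, this forces $\langle(\widetilde{a} - \ep)_+\rangle_A \leq \langle \widetilde{b}\rangle_A + z$ in $\Cu(A)$, and applying $\Cu(\pi)$ (which kills $z$) gives $(a - \ep)_+ \precsim_{A/I} b$.

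For part~(\ref{Prp_Quot.b}), again $\rc(A) = r_A$, and since $r_{A_j} \leq \rc(A_j)$ always it suffices to show $r_A \leq \liminf_j r_{A_j}$. The functional inequality underlying part~(\ref{Prp_Quot.a}) needs no finiteness hypothesis, so after replacing each $A_j$ by $A_j / \ker(\psi_{j\infty})$ (which only decreases $r_{A_j}$) I may assume the connecting maps are injective. By continuity of $\Cu$ one has $\Cu(A) = \dirlim \Cu(A_j)$, hence $\F(\Cu(A)) = \invlim \F(\Cu(A_j))$. Fix $r' < r$ with $r' > \liminf_j r_{A_j}$, so $r' > r_{A_j}$ for all $j$ in an infinite set $S$, and let $a, b \in M_{\infty}(A)_+$ satisfy the functional inequality with constant $r$; I must produce $(a - \ep)_+ \precsim_A b$. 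Choose $j_0 \in S$ and $a_0, b_0 \in M_{\infty}(A_{j_0})_+$ with $\langle (a - \ep)_+\rangle_A \leq \Cu(\psi_{j_0\infty})(\langle a_0\rangle) \leq \langle a\rangle_A$ and with $\Cu(\psi_{j_0\infty})(\langle b_0\rangle) \leq \langle b\rangle_A$ so close to $\langle b\rangle_A$ that $\lambda(\langle b\rangle_A) - \lambda(\Cu(\psi_{j_0\infty})(\langle b_0\rangle)) \leq (r - r')\lambda(\langle 1_A\rangle_A)$ for all $\lambda \in \F(\Cu(A))$; this is possible by a Dini-type argument, using that $\langle b\rangle_A$ is a supremum of an increasing sequence of such images and that $\{\lambda : \lambda(\langle 1_A\rangle_A) \leq 1\}$ is compact. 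For $\lambda \in \F(\Cu(A))$ and $\rho := \lambda \circ \Cu(\psi_{j_0\infty})$, unitality of $\psi_{j_0\infty}$ then gives $\rho(\langle a_0\rangle) + r'\rho(\langle 1_{A_{j_0}}\rangle) \leq \rho(\langle b_0\rangle)$. Because the functionals on $\Cu(A_{j_0})$ of this form are exactly the intersection over $k \geq j_0$ of the decreasing family of sets $\{\sigma \circ \Cu(\psi_{k j_0}) : \sigma \in \F(\Cu(A_k))\}$ (where $\psi_{k j_0} \colon A_{j_0} \to A_k$), a compactness argument yields $k \in S$ with $k \geq j_0$ for which $\rho(\langle a_0\rangle) + r'\rho(\langle 1_{A_{j_0}}\rangle) \leq \rho(\langle b_0\rangle)$ holds for all $\rho$ in the $k$-th set; $r'$-comparison of $A_k$ then gives $\psi_{k j_0}(a_0) \precsim_{A_k} \psi_{k j_0}(b_0)$, and mapping into $A$ and chaining yields $(a - \ep)_+ \precsim_A \psi_{j_0\infty}(a_0) \precsim_A \psi_{j_0\infty}(b_0) \precsim_A b$.

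The hard part in both cases is that the hypothesis controls only the functionals on the smaller Cuntz semigroup ($\Cu(A/I)$, respectively $\Cu(A_j)$) that come from $\Cu(A)$, whereas one wants to apply $r$-comparison of $A$ (respectively $A_k$), which quantifies over \emph{all} functionals. In part~(\ref{Prp_Quot.a}) the element $z$ handles this cleanly --- it is invisible to $\Cu(\pi)$ but infinite on every functional not factoring through the quotient --- so the only remaining subtlety is the (standard) equivalence between the $M_{\infty}(A)_+$-formulation of $r_A$-comparison and its formulation for all of $\Cu(A)$. In part~(\ref{Prp_Quot.b}) no such universal element exists, and one must instead give up a little of $r$ (taking $r' < r$), use compactness of bounded sets of functionals and of the sets $\{\sigma \circ \Cu(\psi_{k j_0})\}$, and use $\F(\Cu(A)) = \invlim \F(\Cu(A_j))$ to transfer the estimate to a single finite stage; making the semicontinuity of the evaluation maps $\rho \mapsto \rho(x)$ precise enough for these compactness arguments to apply is the delicate point.
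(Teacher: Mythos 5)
Your proposal is correct in outline, and its essential reduction is exactly the paper's: residual stable finiteness passes to quotients (and holds for $A$ itself), so \Prp{BW.Prp.3.2.3} converts both statements into the corresponding statements about the functional radius of comparison $r_{\bullet}$. The difference is that the paper stops there and simply cites Proposition~3.2.4(i) and~(iii) of \cite{BRTTW12} for the facts $r_{A/I} \leq r_A$ and $r_A \leq \liminf_j r_{A_j}$, whereas you reprove them; your arguments --- the largest element $z$ of $\Cu(I)$, which is killed by $\Cu(\pi)$ but infinite on every functional not vanishing on $\Cu(I)$, and the continuity-of-$\Cu$ plus compactness-of-functionals argument for inductive limits --- are essentially the ones underlying the cited result. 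The technical points you flag as delicate are real: passing from the $M_{\infty}(A)_+$ formulation of $r$-comparison to a target such as $\langle \widetilde{b}\rangle + z$, which is a supremum rather than the class of an element of $M_{\infty}(A)_+$ (note that for no fixed finite $k$ does the functional inequality hold uniformly against $\langle \widetilde{b} \oplus e_0^{\oplus k}\rangle$), and the semicontinuity and compactness of $\F(\Cu(A))$ needed for the Dini-type step in the inductive limit case. These are precisely what the machinery of \cite{BRTTW12} is set up to handle, and the paper does not address them either, so nothing is lost --- and considerable space is saved --- by citing that proposition rather than redoing the work.
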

\begin{proof}
Part (\ref{Prp_Quot.a}) is essentially immediate from
 residual stable finiteness of  $A/I$ and  Proposition~3.2.4(i) of \cite{BRTTW12}.
Also Part (\ref{Prp_Quot.b}) is  immediate from residual stable finiteness  of $A$, \Prp{BW.Prp.3.2.3}, and
 Proposition~3.2.4(iii) of \cite{BRTTW12}.
\end{proof}

The following remark is taken from Theorem 2.3 of \cite{Tom06} and Theorem 5.1 of \cite{Tom09}.
\begin{rmk}\label{Rmk_190903}
For every compact Hausdorff space $X$ and projection $p \in C(X) \otimes \cK$, we have
\[
\rc \Big( p (C(X) \otimes \cK) p \Big) \leq \max \left( 0, \frac{ \dim (X) - 1}{2 \, \rank (p)} \right)
\quad\text{ and } \quad
\drr \Big(p \big(C(X) \otimes \cK\big) p \Big) =  \frac{\dim (X)}{ \rank (p)}.
\]
\end{rmk}
\section{An approach to the radius of comparison of $C(X) \otimes A$}
\label{SEc_App_3}
In this section, we obtain a lower bound for the radius of comparison of $C(X)\otimes A$
when $A$ is any unital stably finite C*-algebra and $X$ is any compact metric space.

We further discuss the radius of comparison of $C(X) \otimes A$ when we have stronger assumptions on $A$ or $X$.

Let $A$ be a C*-algebra. We denote by $\Prim (A)$ the set of primitive ideals of $A$.
See Section~II.6.5 of \cite{BlEC}, Section~5.4 of \cite{Mur90}, and \cite{PP17} for more details about primitive ideals of a C*-algebra.
\begin{lem}
\label{Lem_191210}
Let $X$ be a compact metric space, let $A$ be a C*-algebra,
and let $J$ be a proper closed ideal of $C(X, A)$.
Then there is a closed subset $F \subseteq X \times \Prim (A)$ such that
\[
J = \big\{ f \in C(X, A) \colon f (x) \in I \text{ for all } (x, I) \in F \big\}.
\]
\end{lem}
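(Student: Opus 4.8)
The plan is to identify closed ideals of $C(X,A)$ with closed subsets of the primitive ideal space $\Prim(C(X,A))$ via the standard correspondence between closed ideals and open subsets of $\Prim$, and then to identify $\Prim(C(X,A))$ itself with $X \times \Prim(A)$ as a set, with the hull-kernel topology refining (but, for our purposes, only needing to be compatible with) the product topology. First I would recall that for any \ca{} $D$, closed ideals of $D$ are in inclusion-reversing bijection with closed subsets of $\Prim(D)$: to a closed ideal $J$ one associates its hull $F = \{ P \in \Prim(D) : J \subseteq P \}$, and then $J = \bigcap_{P \in F} P = \{ d \in D : d \in P \text{ for all } P \in F\}$. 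Applying this with $D = C(X,A) \cong C(X) \otimes A$, it remains to describe $\Prim(C(X)\otimes A)$ concretely. Since $X$ is a compact metric space, $C(X)$ is a separable commutative \ca, and $C(X)\otimes A$ is the \ca{} of continuous $A$-valued functions on $X$; its primitive ideals are exactly the ideals of the form
\[
I_{(x,I)} = \{ f \in C(X,A) : f(x) \in I \},
\]
for $x \in X$ and $I \in \Prim(A)$, and the assignment $(x,I) \mapsto I_{(x,I)}$ is a bijection $X \times \Prim(A) \to \Prim(C(X,A))$. This is a known fact (it follows, e.g., from the identification of $\Prim$ of a \ca{} of $C_0$-sections with the total space of the associated bundle of primitive spectra, or directly from the fact that irreducible representations of $C(X)\otimes A$ factor through an evaluation $C(X,A)\to A$ at a point of $X$ followed by an irreducible representation of $A$); I would either cite it (Section~II.6.5 of \cite{BlEC}) or sketch the two inclusions.

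The main steps are then: (1) given the proper closed ideal $J \subseteq C(X,A)$, form its hull $\widetilde F \subseteq \Prim(C(X,A))$, which is a closed subset and is nonempty because $J$ is proper; (2) transport $\widetilde F$ along the bijection above to a subset $F \subseteq X \times \Prim(A)$; (3) verify that $F$ is closed in the topology of $X \times \Prim(A)$ that we equip the right-hand side with — here I would take the topology making the bijection $X \times \Prim(A) \to \Prim(C(X,A))$ a homeomorphism, and note this is all that is needed for the statement (the lemma only asserts $F$ is closed, and which topology on $X\times\Prim(A)$ is meant should be pinned down, presumably the natural hull-kernel topology, equivalently the product topology, on $X\times\Prim(A)$; one checks the primitive-ideal topology on $\Prim(C(X,A))$ pulls back to this); (4) read off the displayed formula: by the hull-kernel correspondence, $J = \{ f : f \in P \text{ for all } P \in \widetilde F\}$, and $f \in I_{(x,I)}$ means exactly $f(x) \in I$, so $J = \{ f \in C(X,A) : f(x) \in I \text{ for all } (x,I) \in F\}$, as claimed.

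The one point that requires genuine care — and which I expect to be the main obstacle — is the identification of the topological space $\Prim(C(X,A))$ with $X \times \Prim(A)$, specifically showing that every primitive ideal of $C(X,A)$ has the form $I_{(x,I)}$ and that the topology matches. The set-theoretic surjectivity uses that an irreducible representation $\pi$ of $C(X,A)$ restricted to the central copy of $C(X)$ acts by a character, i.e., evaluation at some $x \in X$ (using that $C(X)$ sits in the center of the multiplier algebra and acts non-degenerately, so $\pi|_{C(X)}$ has one-dimensional, hence scalar, image on the commutant — here compactness/unitality of $C(X)$ is convenient); then $\pi$ descends to an irreducible representation of $C(X,A)/\{f : f(x)=0\} \cong A$, giving $I = \ker$ of that, a primitive ideal of $A$. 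Injectivity and the fact that each $I_{(x,I)}$ is primitive are routine. For the topology, I would observe that a basic closed set in $\Prim(C(X,A))$ is the hull of an ideal, and ideals of $C(X,A)$ generated by $C(U) \otimes L$ for $U \subseteq X$ open and $L \trianglelft A$ have hull $(X\times\Prim(A)) \setminus (U \times \{P : L \not\subseteq P\})$, which generate the product topology; the reverse containment of topologies is similar. If one prefers to avoid this entirely, an acceptable alternative is to phrase the conclusion so that $F$ is merely required to be closed in the hull-kernel topology transported from $\Prim(C(X,A))$, in which case steps (1)–(4) above are immediate from the standard ideal/hull correspondence and nothing about the product topology is needed; I would likely take this lighter route and remark that this transported topology agrees with the product topology on $X \times \Prim(A)$.
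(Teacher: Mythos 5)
Your proposal is correct and follows essentially the same route as the paper, which simply cites the hull--kernel correspondence (every closed ideal is the intersection of the primitive ideals containing it) together with the identification of the primitive ideals of $C(X,A)$ with the ideals $\{f : f(x)\in I\}$ for $(x,I)\in X\times\Prim(A)$. The details you supply (the character argument for $\pi|_{C(X)}$ and the matching of the hull--kernel topology with the product topology) are exactly what the cited references provide.
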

\begin{proof}
The result is essentially immediate from
 Theorem~5.4.3 of \cite{Mur90}, Proposition~2.16 of  \cite{BK04} and Theorem~II.2.2.4 of  \cite{BlEC}.
\end{proof}
The first part of the following lemma is needed to apply  Definition~\ref{rc_dfn}
and the second part is needed  to prepare for Proposition~\ref{Prp_Z_st_r}.
\begin{lem}\label{Lem19.11.10}
Let $X$ be a compact metric space and let $A$ be a unital C*-algebra.
Then:
\begin{enumerate}
\item\label{Lem19.11.10.a}
If $A$ is stably finite, then $C(X, A)$ is also stably finite.
\item\label{Lem19.11.10.b}
If $A$ is residually stably finite, then $C(X, A)$ is also residually stably finite.
\end{enumerate}
\end{lem}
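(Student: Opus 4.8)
The plan is to prove both parts by reducing to the corresponding facts about $A$ via quotients and point evaluations. First I would recall that stable finiteness of a unital \ca{} $B$ is equivalent to the statement that for every $n$, every isometry in $M_n(B)$ is a unitary, equivalently no proper subprojection of $1_{M_n(B)}$ is Murray--von Neumann equivalent to $1_{M_n(B)}$.

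For Part~(\ref{Lem19.11.10.a}), the key observation is that $C(X,A)$ embeds into $\prod_{x \in X} A$ via the family of point evaluations $\ev_x \colon C(X,A) \to A$, and more usefully that $M_n(C(X,A)) \cong C(X, M_n(A))$ for every $n$. So suppose $v \in M_n(C(X,A)) \cong C(X, M_n(A))$ satisfies $v^*v = 1$; then for each $x \in X$ we have $v(x)^* v(x) = 1_{M_n(A)}$, and since $M_n(A)$ is finite (as $A$ is stably finite) we get $v(x) v(x)^* = 1_{M_n(A)}$ for every $x$, hence $vv^* = 1$. Thus $M_n(C(X,A))$ is finite for every $n$, i.e.\ $C(X,A)$ is stably finite. (One should note $C(X,A)$ is unital since $A$ is unital and $X$ is compact.)

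For Part~(\ref{Lem19.11.10.b}), I would use \Lem{Lem_191210} to describe the quotients of $C(X,A)$. Let $J$ be a proper closed ideal of $C(X,A)$, so by \Lem{Lem_191210} there is a closed set $F \subseteq X \times \Prim(A)$ with $J = \{ f \colon f(x) \in I \text{ for all } (x,I) \in F\}$. For each pair $(x, I) \in F$, evaluation at $x$ followed by the quotient map $A \to A/I$ gives a \uhm{} $C(X,A) \to A/I$ that factors through $C(X,A)/J$; assembling these gives an embedding of $C(X,A)/J$ into $\prod_{(x,I) \in F} A/I$. Since $A$ is residually stably finite, each $A/I$ is stably finite, and a product of stably finite unital \ca{s} is stably finite, so any \usa{} of such a product is stably finite. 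A subtle point: one must check $A/I$ is unital (true, as $A$ is unital and $I$ proper) and that the resulting map $C(X,A)/J \to \prod A/I$ is genuinely injective, which follows from the explicit description of $J$ in \Lem{Lem_191210}. Hence $C(X,A)/J$ is stably finite, proving Part~(\ref{Lem19.11.10.b}).

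The main obstacle, such as it is, is bookkeeping around \Lem{Lem_191210}: one needs that the ideal $J$ is exactly the intersection of the kernels of the maps $C(X,A) \to A/I$ over $(x,I) \in F$, which is precisely what the displayed formula for $J$ says, but it is worth spelling out that $f \in J$ iff $(\ev_x \text{ composed with } A \to A/I)(f) = 0$ for all $(x,I) \in F$. Everything else is a routine application of the permanence of stable finiteness under passing to matrix amplifications, products, and unital subalgebras.
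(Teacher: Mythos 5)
Your proposal is correct and follows essentially the same route as the paper: part (\ref{Lem19.11.10.a}) via point evaluations and the identification $M_n(C(X,A)) \cong C(X,M_n(A))$, and part (\ref{Lem19.11.10.b}) via the description of ideals in \Lem{Lem_191210}. The only (harmless) difference is in the packaging of part (\ref{Lem19.11.10.b}): the paper checks finiteness of $C(X,A)/J$ by a direct element computation ($f^*f - 1 \in J$ forces $f(x)^*f(x) - 1_A \in I$, hence $f(x)f(x)^* - 1_A \in I$ by finiteness of $A/I$, hence $ff^* - 1 \in J$), whereas you embed $C(X,A)/J$ into $\prod_{(x,I)\in F} A/I$ and invoke permanence of stable finiteness under products and unital subalgebras.
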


\begin{proof}
We  prove (\ref{Lem19.11.10.a}).
Since $C(X, A)$ is unital, we apply Lemma~5.1.2 of \cite{RLL00}.
Let $f \in  C(X, A)$ satisfy $f^* f = 1_{C(X, A)}$.
So, $f(x)^* f(x)= 1_A$ for all $x \in A$.
 Since $A$ is finite, it follows that $f f^* = 1_{C(X, A)}$.
A similar argument works if $f \in M_n (C(X, A))$ for $n\in \N$.

To prove (\ref{Lem19.11.10.b}), we must show that all quotients of $C(X, A)$ are stably finite.
Let $J$ be a proper closed ideal of $C(X, A)$.
Then, by Lemma~\ref{Lem_191210}, there is a closed subset $F \subseteq X \times \Prim (A)$  such that
\[
J =  \big\{ f \in C(X, A) \colon f (x) \in I \text{ for all } (x, I) \in F \big\}.
\]
Since $M_n \big(C(X, A)/ J\big) \cong  C(X,M_n(A))/ M_n (J)$ for all $n \in \N$,
 it suffices to show that $C(X, A)/ J$ is finite.
Let $f \in C(X, A)$ satisfy $f^* f + J= 1_{C(X, A)} +J$. Then $f^* f - 1_{C(X, A)} \in J$.
Thus $f(x)^* f(x) - 1_A \in I$ for all $(x, I) \in F$.
Since $A/ I$ is finite for all $(x, I) \in F$, it follows that
$f(x) f(x)^* - 1_A \in I$ for all $(x, I) \in F$.
This relation implies that $f f^* - 1_{C(X, A)} \in J$ and therefore $f f^* + J = 1_{C(X, A)} + J$.
\end{proof}
By Lemma~\ref{Lem19.11.10}(\ref{Lem19.11.10.a}) and  the discussion after Proposition II.4.6 of~\cite{BH82},
we have  $\QT (C(X) \otimes A )\neq \varnothing$
for every compact space $X$ and stably finite unital C*-algebra $A$.
\begin{lem} \label{Lem.3.2019.02.17}
Let $X$ be a compact metric space and $A$ be a unital \ca. Let
also $l \in \N$ and let $a,b \in M_{l}(A)_+$. Then $1_{C(X)} \otimes
a \precsim_{C(X) \otimes  A}  1_{C(X)} \otimes  b$ if and only if
$a \precsim_{A} b $.
\end{lem}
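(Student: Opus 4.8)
The ``if'' direction is the easy one: if $a \precsim_A b$, then there is a sequence $(c_n)$ in $M_\I(A)$ with $\limi{n} c_n b c_n^* = a$. Viewing each $c_n$ as the constant function $1_{C(X)} \otimes c_n$ in $M_\I(C(X,A))$, the same sequence witnesses $1_{C(X)} \otimes a \precsim_{C(X)\otimes A} 1_{C(X)} \otimes b$, since the operations are performed pointwise and the norm on $C(X,M_l(A))$ dominates the norm at any single point. So the content is in the converse.

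For the ``only if'' direction, suppose $1_{C(X)} \otimes a \precsim_{C(X)\otimes A} 1_{C(X)} \otimes b$. The plan is to evaluate at a point. Fix $x_0 \in X$ and let $\ev_{x_0} \colon C(X, M_l(A)) \to M_l(A)$ (and more generally $M_\I(C(X,A)) \to M_\I(A)$) be the evaluation $*$-homomorphism $f \mapsto f(x_0)$. This is unital and surjective. A standard fact is that Cuntz subequivalence is preserved by $*$-homomorphisms: if $\ph \colon C \to D$ is a $*$-homomorphism and $a' \precsim_C b'$, then $\ph(a') \precsim_D \ph(b')$ — indeed if $c_n b' c_n^* \to a'$ then $\ph(c_n)\ph(b')\ph(c_n)^* \to \ph(a')$. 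Applying this with $\ph = \ev_{x_0}$, $a' = 1_{C(X)}\otimes a$, $b' = 1_{C(X)}\otimes b$ gives $\ev_{x_0}(1_{C(X)}\otimes a) = a \precsim_A \ev_{x_0}(1_{C(X)}\otimes b) = b$, which is exactly what we want.

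\textbf{Main obstacle.} There is essentially no obstacle here; the only thing to be slightly careful about is the bookkeeping between $M_\I$ and the tensor-product picture, i.e., the identifications $M_n(C(X)\otimes A) \cong C(X, M_n(A))$ and the compatibility of $\ev_{x_0}$ with the block-diagonal embeddings $M_n \hookrightarrow M_{n+1}$, so that ``$\ev_{x_0}$ on $M_\I$'' is well defined. Once that is set up, both directions are one line each. (Alternatively, one can prove the converse using functionals: every $\tau \in \QT(A)$ pulls back along $\ev_{x_0}$ to a quasitrace on $C(X)\otimes A$, and $d$-functions are compatible with $\ev_{x_0}$; but the $*$-homomorphism argument is cleaner and avoids invoking comparison.) I would present the $*$-homomorphism argument as the proof.
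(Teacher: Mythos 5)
Your proposal is correct and follows essentially the same route as the paper: the forward direction via the unital embedding $c \mapsto 1_{C(X)} \otimes c$, and the converse by evaluating the witnessing sequence at a fixed point $x_0 \in X$ (the paper writes $v_n = w_n(x_0)$ rather than invoking functoriality of $\precsim$ under $*$-homomorphisms, but it is the same argument).
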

\begin{proof}
Without loss of generality, assume $l=1$. Set $B=C(X) \otimes  A$.

To show the backward implication,
assume $a \precsim_A b$.
Define a homomorphism $\varphi \colon A \to B$ by $\varphi(c)=1_{C(X)} \otimes c$.
 Using this and $a \precsim_A b$, we get  $1_{C(X)} \otimes a  \precsim_{B}  1_{C(X)} \otimes  b$.

To show the forward implication, assume $a\otimes 1_{C(X)} \precsim_{B} b\otimes 1_{C(X)}$.
It suffices to find a sequence $(v_n)^{\infty}_{n=1}$ in $A$ with
$\lim_{n \to \infty} \| v_n b v^*_n - a \|=0$.

Identifying $C(X) \otimes A$ with $C(X, A)$ in the standard way, we get
$a \precsim_{C(X, A)} b$.
This relation implies that there exists a sequence $(w_n)_{n=1}^{\infty}$ in
$C(X, A)$ such that
\[
\lim_{n \to \infty} \| w_n b w^*_n - a \|=0.
\]
Now fix some $x_0 \in X$ and define $v_n=w_n(x_0)$ for $n \in \N$.
\end{proof}
The following proposition is essentially  immediate from Proposition~\ref{Prp_Quot}(\ref{Prp_Quot.a})
as soon as we assume  that
$A$ is residualy stably finite.
\begin{prp}\label{Prp.20199.02.17}
Let $X$ be a compact metric space and
let $A$ be a stably finite unital \ca. Then
$\rc (A) \leq \rc \big( C(X) \otimes  A \big)$.
\end{prp}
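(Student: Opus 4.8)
The plan is to reduce the statement to Cuntz comparison in a quotient and then invoke \Prp{Prp_Quot}(\ref{Prp_Quot.a}). The obstacle is that $A$ need only be stably finite, not residually stably finite, so we cannot apply \Prp{Prp_Quot}(\ref{Prp_Quot.a}) directly to $C(X) \otimes A$; we must instead build a suitable residually stably finite algebra to interpose. A clean fix is to evaluate at a point: fix $x_0 \in X$ and let $\ev_{x_0} \colon C(X) \otimes A \to A$ be the unital surjection $f \mapsto f(x_0)$. This exhibits $A$ as a quotient of $B := C(X) \otimes A$, but since $B$ may itself fail to be residually stably finite we cannot quote \Prp{Prp_Quot}(\ref{Prp_Quot.a}) for the pair $(B, \ker \ev_{x_0})$ either. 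So the honest route is the direct one using the functional/quasitrace description of the radius of comparison together with \Lem{Lem.3.2019.02.17}.

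First I would recall that, by \Lem{Lem19.11.10}(\ref{Lem19.11.10.a}), $B = C(X) \otimes A$ is stably finite and unital, so $\rc(B)$ is defined and (by the remark after \Def{rc_dfn}, since we only need the definition, not simplicity) it suffices to show: if $B$ has $r$-comparison for some $r \in [0,\I)$, then $A$ has $r$-comparison. So fix such an $r$ and fix $a, b \in M_\infty(A)_+$ with $d_\ta(a) + r < d_\ta(b)$ for all $\ta \in \QT(A)$; I must deduce $a \precsim_A b$. Without loss of generality $a, b \in M_l(A)_+$ for some $l \in \N$. Consider $1_{C(X)} \otimes a$ and $1_{C(X)} \otimes b$ in $M_l(B)_+$. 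By \Lem{Lem.3.2019.02.17}, it is enough to prove $1_{C(X)} \otimes a \precsim_B 1_{C(X)} \otimes b$, and since $B$ has $r$-comparison, it is enough to check $d_\sm(1_{C(X)} \otimes a) + r < d_\sm(1_{C(X)} \otimes b)$ for every $\sm \in \QT(B)$.

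The key step is therefore the following claim about quasitraces: for every $\sm \in \QT(B)$ there is $\ta \in \QT(A)$ with $d_\sm(1_{C(X)} \otimes c) = d_\ta(c)$ for all $c \in M_\infty(A)_+$. To see this, define $\ta = \ta_\sm \colon A \to \mathbb{C}$ by $\ta(c) = \sm(1_{C(X)} \otimes c)$; this is the pullback of $\sm$ along the unital $*$-homomorphism $\ph \colon A \to B$, $\ph(c) = 1_{C(X)} \otimes c$, and the pullback of a normalized $2$-quasitrace along a unital $*$-homomorphism is again a normalized $2$-quasitrace, so $\ta \in \QT(A)$. Moreover $\ph$ extends entrywise to $M_\infty(A) \to M_\infty(B)$ and is compatible with functional calculus, so for $c \in M_\infty(A)_+$ we get $\ta(c^{1/n}) = \sm\bigl((1 \otimes c)^{1/n}\bigr) = \sm\bigl(1 \otimes c^{1/n}\bigr)$, and letting $n \to \infty$ gives $d_\ta(c) = d_\sm(1_{C(X)} \otimes c)$, as claimed. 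Applying this with $c = a$ and $c = b$ and using the hypothesis $d_\ta(a) + r < d_\ta(b)$, we obtain $d_\sm(1_{C(X)} \otimes a) + r < d_\sm(1_{C(X)} \otimes b)$ for all $\sm \in \QT(B)$. By $r$-comparison of $B$ this yields $1_{C(X)} \otimes a \precsim_B 1_{C(X)} \otimes b$, hence $a \precsim_A b$ by \Lem{Lem.3.2019.02.17}. Therefore $A$ has $r$-comparison whenever $B$ does, and taking the infimum over all such $r$ gives $\rc(A) \leq \rc(C(X) \otimes A)$. The one point requiring a little care (the ``main obstacle'') is the assertion that pulling back a $2$-quasitrace along a unital $*$-homomorphism gives a $2$-quasitrace and commutes with the relevant matrix amplifications and functional calculus used to compute $d_\ta$; this is standard from the definition of $2$-quasitrace in \cite{BH82}, but it is the step that actually uses more than formal manipulation, and I would cite that reference for it.
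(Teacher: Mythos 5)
Your proof is correct and follows essentially the same route as the paper's: both pull back each $2$-quasitrace $\sm \in \QT(C(X)\otimes A)$ along the unital embedding $c \mapsto 1_{C(X)} \otimes c$ to get $\ta_\sm \in \QT(A)$ with $d_{\ta_\sm}(c) = d_\sm(1_{C(X)} \otimes c)$, then apply $r$-comparison of $C(X)\otimes A$ and \Lem{Lem.3.2019.02.17} to conclude $a \precsim_A b$. Your added care about why the pullback is again a normalized $2$-quasitrace and commutes with functional calculus is a reasonable elaboration of a step the paper states without comment.
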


\begin{proof}
Set $B=C(X) \otimes A$.
We may clearly assume $\rc (B) < \infty$.
Let $r \in [0, \I)$ and suppose that $B$ has $r$-comparison.
Let $l \in \N$. Let $a, b \in M_{l}(A)_+$ satisfy
\begin{equation} \label{Eq1_191127}
d_{\ta} (a) + r < d_{\ta} (b)
\end{equation}
for all $\tau \in \QT (A)$.
Let $\theta \in \QT(B)$.
Define
$\tau_{\theta} \colon A \to \mathbb{C}$ by
$\tau_{\theta} (a)= \theta \big(1_{C(X)} \otimes a \big)$ for all $a \in A$.
Then $\tau_{\theta} \in \QT(A)$ and
 $d_{\tau_{\theta}} (c)= d_{\theta} \big(1_{C(X)} \otimes c \big)$ for all $c\in M_{l}(A)_+$.
Using this and (\ref{Eq1_191127}), we get
$$d_{\theta} \left(1_{C(X)}\otimes a\right) + r < d_{\theta} \left(1_{C(X)} \otimes b\right)$$
for all $\theta \in \QT(B)$.
Since $B$ has $r$-comparison,
it follows that
$1_{C(X)} \otimes a \precsim_{B} 1_{C(X)} \otimes b$.
Then, by \Lem{Lem.3.2019.02.17}, $a \precsim_A b$.
Therefore
$\rc (A) \leq r$.
Taking the infimum over $r \in [0, \I)$
such that $B$ has $r$-comparison,
we get $\rc (A) \leq \rc \big( B \big)$.
\end{proof}
\begin{prp}\label{Prp.LS.Cojj.special}
Let $X$ and $Y$ be compact metric spaces with $\dim_{\Q}(X) = \dim (X)$,
$\dim_{\Q} (Y) = \dim (Y)$, and $\dim_{\Q} (X \times Y) = \dim (X \times Y)$. Then 
Conjecture~\ref{Phi_Conjecture} is true for $A= M_n(C(Y))$ for $n\in \N$.
\end{prp}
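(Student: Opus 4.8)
The plan is to break~(\ref{Eq_Con_191116}) into its two inequalities. The left-hand one, $\rc(A)\le\rc\big(C(X)\otimes A\big)$, is immediate from \Prp{Prp.20199.02.17}, since $A=M_n(C(Y))$ is a stably finite unital C*-algebra. For the right-hand one I would use the identification $C(X)\otimes M_n(C(Y))\cong M_n\big(C(X\times Y)\big)$, which turns both sides into radii of comparison of matrix algebras over commutative C*-algebras and so brings \Thm{Thm_El_Ni_C(X)} into play. (I would first dispose of the case $\dim(X)=\infty$ or $\dim(Y)=\infty$, where under the stated hypotheses the relevant radii are all infinite and~(\ref{Eq_Con_191116}) is immediate, and then assume $\dim(X),\dim(Y)<\infty$.)

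The intermediate step I would isolate is the scaling identity
\[
\rc\big(M_n(C(Z))\big)=\tfrac1n\,\rc\big(C(Z)\big)
\]
for a compact metric space $Z$ — in fact I only need ``$\le$'' for $Z=X\times Y$ and ``$\ge$'' for $Z=Y$. Its proof rests on three facts: since $C(Z)$ is exact, every $\sigma\in\QT\big(M_n(C(Z))\big)$ equals $\tr_n\otimes\tau$ for a unique $\tau\in\QT\big(C(Z)\big)$, where $\tr_n$ is the normalized trace on $M_n$; after rearranging the matrix tensor factors, $d_\sigma(a)=\tfrac1n d_\tau(a)$ for every $a\in\Mi\big(M_n(C(Z))\big)_{+}$, identified as usual with an element of $\Mi\big(C(Z)\big)_{+}$; and $a\precsim_{M_n(C(Z))}b$ if and only if $a\precsim_{C(Z)}b$. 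Substituting these into \Def{rc_dfn} exchanges $r$-comparison for $C(Z)$ with $\tfrac rn$-comparison for $M_n(C(Z))$, in both directions. I would either cite this or record it as a one-paragraph lemma.

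Next I would apply \Thm{Thm_El_Ni_C(X)} to $C(X\times Y)$, which is legitimate because $\dim_{\Q}(X\times Y)=\dim(X\times Y)$, obtaining $\rc\big(C(X\times Y)\big)\le\max\big(0,\tfrac{\dim(X\times Y)}{2}-1\big)$ (valid whatever the parity of $\dim(X\times Y)$), and to $C(Y)$, legitimate because $\dim_{\Q}(Y)=\dim(Y)$, obtaining $\rc\big(C(Y)\big)\ge\max\big(0,\tfrac{\dim(Y)}{2}-2\big)$ (again independent of parity). Combining these with the scaling identity and the classical product inequality $\dim(X\times Y)\le\dim(X)+\dim(Y)$ yields
\[
\rc\big(C(X)\otimes M_n(C(Y))\big)\le\tfrac1n\max\!\Big(0,\tfrac{\dim X+\dim Y}{2}-1\Big),\qquad
\rc\big(M_n(C(Y))\big)\ge\tfrac1n\max\!\Big(0,\tfrac{\dim Y}{2}-2\Big).
\]
A short case analysis — whether $\dim Y\ge4$ or $\dim Y\le3$, using nothing more than $n\ge1$ — then shows the left-hand quantity is at most $\tfrac12\dim X+\tfrac1n\max\big(0,\tfrac{\dim Y}{2}-2\big)+1$, hence at most $\tfrac12\dim X+\rc\big(M_n(C(Y))\big)+1$, which is the right-hand inequality of~(\ref{Eq_Con_191116}).

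The one spot where I expect some care is the additive constant: to land on exactly ``$+1$'' one must feed in the sharp Elliott--Niu estimates of \Thm{Thm_El_Ni_C(X)} rather than the coarser bound $\rc\big(p(C(Z)\otimes\cK)p\big)\le\max\big(0,\tfrac{\dim Z-1}{2\,\rank p}\big)$ of \Rmk{Rmk_190903}; in the case $n=1$ the latter is off by just enough to break the arithmetic. The hypothesis $\dim_{\Q}(X)=\dim(X)$ does not appear to be needed for this particular route.
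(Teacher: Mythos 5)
Your proposal is correct and follows essentially the same route as the paper: Proposition~\ref{Prp.20199.02.17} for the lower bound, the identification $C(X)\otimes M_n(C(Y))\cong M_n\big(C(X\times Y)\big)$ together with the $1/n$ scaling of the radius of comparison, Theorem~\ref{Thm_El_Ni_C(X)} applied to $X\times Y$ and to $Y$, and $\dim(X\times Y)\le\dim(X)+\dim(Y)$. The only difference is presentational: you package the Elliott--Niu estimates in a parity-independent form, whereas the paper reduces to $\dim(X\times Y)\ge 5$, $\dim(Y)\ge 4$ and runs an explicit case analysis on the parities of $\dim(Y)$ and $\dim(X\times Y)$.
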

\begin{proof}
By Proposition~\ref{Prp.20199.02.17}, it suffices to show that the right-hand side of (\ref{Eq_Con_191116}) holds.
To prove it, we may assume $\dim (X \times Y)\geq 5$ and $\dim (Y) \geq 4$.
Then, by Theorem~\ref{Thm_El_Ni_C(X)}, we have the following:
\begin{enumerate}
\item
If $\dim (Y)$ is odd and $\dim (X \times Y)$ is even, then
\begin{align*}
\rc \left(C (X) \otimes A \right) - 1
& \leq \frac{1}{n} \left(\frac{\dim (X \times Y)}{2} - 1\right) - 1
\\
&\leq \frac{1}{2} \dim (X) + \frac{1}{n} \left(\frac{\dim (Y) - 1}{2}  - 1\right)
= \frac{1}{2} \dim (X) + \rc (A).
\end{align*}
\item
If $\dim (Y)$ is even and $\dim (X \times Y)$ is odd, then
\begin{align*}
\rc \left(C (X) \otimes A\right) - 1
&= \frac{1}{n} \left(\frac{\dim (X \times Y) -1}{2}  - 1\right) - 1
\\
&\leq \frac{1}{2} \dim (X) + \frac{1}{n} \left(\frac{\dim  (Y) }{2}  - 2 \right)
= \frac{1}{2} \dim (X) + \rc (A).
\end{align*}
\end{enumerate}
The argument above is similar for other choices of $\dim (Y)$ and $\dim (X \times Y)$.
\end{proof}
The following proposition provides a relationship between
pointwise Cuntz comparison over $C(X, A)$  and the topological
dimension of $X$.
Although we only apply the proposition to the case $\dim (X)=0$ in this paper, 
the more general setting of the proposition might be helpful
when dealing with pointwise Cuntz comparison over $C(X, A)$.
\begin{prp}\label{Prp_CXA_Cu_gener}
Let $X$ be a compact  metric space with $m=\dim (X) < \infty$. Let
$A$ be a unital \ca, let $l \in \N$, and let $a, b \in M_l(C(X, A))_+$.
 If $a(x) \precsim_A b(x)$ for all $x\in X$,
 then $a \precsim_{C(X, A)} 1_{M_{m+1}} \otimes b$.
\end{prp}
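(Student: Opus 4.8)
The plan is to use a partition-of-unity argument against a finite cover of $X$ adapted to the covering dimension. Since $m = \dim(X) < \infty$, every finite open cover of $X$ has a finite open refinement of order at most $m+1$, i.e. one that splits into $m+1$ subfamilies $\mathcal{U}^{(0)}, \dots, \mathcal{U}^{(m)}$ where the members within each $\mathcal{U}^{(k)}$ are pairwise disjoint. The strategy is: first fix $\ep > 0$ and, using $a(x) \precsim_A b(x)$ together with \Lem{PhiB.Lem_18_4}(\ref{PhiB.Lem_18_4_11.c}) and a continuity/compactness argument, produce for each $x$ a small open neighborhood $U_x$ and an element $w_x \in M_l(A)$ with $\| w_x b(y) w_x^* - a(y) \| < \ep$ for all $y \in U_x$ (one gets this locally because $b$ varies little on $U_x$ and a single contraction works near $x$; some care with the $(\cdot - \ep)_+$ functional calculus as in \Lem{PhiB.Lem_18_4}(\ref{PhiB.Lem_18_4_10}) is needed to pass from $a(x) \precsim_A b(x)$ to an honest approximate implementer). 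Refine $\{U_x\}_{x \in X}$ to a finite cover of order $\leq m+1$, write it as $\bigsqcup_{k=0}^{m}\mathcal{U}^{(k)}$, and choose a subordinate continuous partition of unity $\{h_U\}$.

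Next I would assemble, for each fixed $k \in \{0, 1, \dots, m\}$, a single global element $v_k \in C(X, M_l(A))$ by patching: on the (disjoint) supports of the functions $h_U$ with $U \in \mathcal{U}^{(k)}$, set $v_k = \sqrt{h_U}\, w_U$, and $v_k = 0$ elsewhere; disjointness within $\mathcal{U}^{(k)}$ makes this well-defined and continuous. Then $\sum_{k=0}^{m} v_k b v_k^*$ is a function whose value at $x$ is $\sum_U h_U(x)\, w_U b(x) w_U^*$, which is within $\ep$ of $\big(\sum_U h_U(x)\big) a(x) = a(x)$ in norm, uniformly in $x$. Stacking the $v_k$ into the rows of an element $V \in M_{1,m+1}(C(X,M_l(A)))$, we get $\| V (1_{M_{m+1}} \otimes b) V^* - a \| < \ep$ in $M_l(C(X,A))$, hence by \Lem{PhiB.Lem_18_4}(\ref{PhiB.Lem_18_4_10}), $(a - \ep)_+ \precsim_{C(X,A)} 1_{M_{m+1}} \otimes b$. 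Letting $\ep \to 0$ and invoking \Lem{PhiB.Lem_18_4}(\ref{PhiB.Lem_18_4_11}) (equivalence of (a) and (b)) gives $a \precsim_{C(X,A)} 1_{M_{m+1}} \otimes b$, which is the claim.

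The main obstacle I anticipate is the first step: producing the \emph{locally constant} approximate implementers $w_x$ with uniform control on a neighborhood. The pointwise hypothesis $a(x) \precsim_A b(x)$ only gives, for each $x$, a \emph{sequence} $(c_n)$ in $M_l(A)$ with $c_n b(x) c_n^* \to a(x)$; one must first truncate to a single $c_n =: w_x$ with $\| w_x b(x) w_x^* - a(x) \| < \ep/2$, and then use norm-continuity of $y \mapsto b(y)$ and $y \mapsto a(y)$ to find $U_x$ on which $\| w_x b(y) w_x^* - a(y) \| < \ep$. This is routine once one also records that $w_x$ can be taken with a uniform norm bound on $U_x$ after shrinking (needed so that the patched $v_k$ and the final $V$ are genuine continuous functions with controlled norm); handling the $M_l$ versus $M_\infty$ bookkeeping and keeping the matrix amplification honest is the only place real care is required. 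Everything after the cover is chosen is a bounded partition-of-unity computation.

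For completeness I note the $l = 1$ reduction is harmless: replacing $A$ by $M_l(A)$ (which is again unital) and $C(X, A)$ by $C(X, M_l(A)) \cong M_l(C(X,A))$ reduces to the case $a, b \in C(X,A)_+$, so I would state the argument in that case and remark that the general case follows by this identification.
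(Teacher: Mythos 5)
Your proposal is correct and follows essentially the same route as the paper's proof: pointwise approximate implementers extended to neighborhoods by continuity, a finite open refinement of order $m+1$ obtained from $\dim(X)=m$ (the paper cites Proposition~1.5 of \cite{KW04} for this), a subordinate partition of unity, and the patched elements $v_k=\sum_{U\in\mathcal{U}^{(k)}}\sqrt{h_U}\,w_U$ which are well defined by disjointness within each subfamily. The only cosmetic difference is the final assembly: you sum $\sum_{k}v_k b v_k^*$ via a row vector and compare it to $a$ in norm directly, whereas the paper keeps the $m+1$ pieces as a diagonal and splits $(a-\ep)_+$ using Lemma~\ref{PhiB.Lem_18_4}(\ref{PhiB.Lem_18_4_10.XV}); both work, and your worry about uniform norm bounds on the $w_x$ is moot since the cover is finite.
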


\begin{proof}
Set $B= C(X, A)$,  $\widetilde{b} = 1_{M_{m+1}} \otimes b$, and
$\Gamma_m =\{0, 1, \ldots, m\}$. We must show that $a \precsim_{B}
\widetilde{b}$. By \Lem{PhiB.Lem_18_4}(\ref{PhiB.Lem_18_4_11.b}),
it suffices  to show that for every $\ep > 0$ we have $(a -
\ep)_{+} \precsim_{B} \widetilde{b}$. So, let $\ep>0$ and let
$x\in X$ be arbitrary. We may assume without loss of generality
that $l=1$.

For $x\in X$, since $a(x) \precsim_A b(x)$,
there exists  $v_{x}$ in $A$ such that
\begin{equation}
\label{EQ.333tqz}
 \| v_{x} b(x) v^*_{x} - a(x)\|< \frac{\ep}{m+1}.
\end{equation}
Since $a, b \in B$, the map $\xi_{x} \colon X \to [0, \infty)$,
given by
\[
z \mapsto \|v_{x} b(z) v^*_{x} - a(z) \|,
\]
 is continuous.
Using this, (\ref{EQ.333tqz}), and  $\xi_{x} (x)<\frac{\ep}{m+1}$,
we get an open neighborhood $N(x)$ of $x$ such that, for all $z\in N(x)$,
 \begin{equation}\label{Eq191012t2qz}
 \left\|v_{x} b(z) v^*_{x} - a(z) \right\|< \frac{\ep}{m+1}.
\end{equation}
 Since $X$ is compact and $X=\bigcup_{x \in X} N(x)$, we can choose
$x_0, x_1, \ldots, x_n \in X$ such that $X=\bigcup_{l=0}^{n} N(x_l)$
for some $n \in \N$.
By Proposition~1.5 of \cite{KW04}, there exists an $m$-decomposable finite open refinement of
$\{N (x_l) \colon l = 0, \ldots, n\}$
of the form
\begin{equation}\label{EQ2.191017}
\{U_{0, j}\}_{j \in J_{0}} \cup \{U_{1,j}\}_{j \in J_{1}} \cup \ldots \cup \{U_{m,j}\}_{j \in J_{m}}
\end{equation}
with $U_{k, j} \cap U_{k, j'} = \varnothing$ for $j\neq j'$ and $k \in \Gamma_m$.

Choose a partition of unity subordinate to this cover, say
 $f_{k,j} \colon X \to [0, 1]$
such that, for all $x\in X$,
\begin{equation}\label{EQ5.191017}
\mathrm{supp}(f_{k,j}) \subseteq U_{k,j}
\quad
\text{ and }
\quad
\sum_{k\in \Gamma_m}\sum_{ j \in J_k} f_{k,j} (x) =1.
\end{equation}
Also, for all $k \in \Gamma_m$ and $j \in J_k$, we  choose  $l(k, j) \in \{0, 1, \ldots, n\}$
such that $U_{k, j} \subseteq N\left(x_{l(k, j)}\right)$.

Now set
\[
v = \diag \left( \sum_{j \in J_0} f^{1/2}_{0, j} v_{x_{l(0, j)} },
\sum_{j \in J_1} f^{1/2}_{1, j} v_{x_{l(1, j)} },
 \ldots,
 \sum_{j \in J_m} f^{1/2}_{m, j} v_{x_{l(m, j)} }\right).
\]
Clearly  $v \in  M_{m+1}(B)$.
Now we claim
\[
\left\| \diag \left(\sum_{j \in J_0} f_{0,j} a,
 \ldots, \sum_{j \in J_m} f_{m, j} a \right) -  v
\widetilde{b} v^*\right\| < \frac{\ep}{m+1}.
\]
To prove the claim,
for every $z\in X$, define
$\Lambda_z = \big\{ (s, t) \in \Gamma_m \times \big(\bigcup_{k=0}^{m} J_k\big)  \colon z \in U_{s, t} \big\}$.
Using (\ref{EQ2.191017}) and (\ref{EQ5.191017}) at the last step, we compute
\begin{align*}
v \widetilde{b} v^*
&=
\diag
 \left(
\sum_{j,t \in J_0} f^{1/2}_{0, j} f^{1/2}_{0, t} v_{x_{l(0, j)} } b v^*_{x_{l(0, t)}},
\ldots,
\sum_{j,t \in J_m} f^{1/2}_{m, j} f^{1/2}_{m, t} v_{x_{l(m, j)} } b v^*_{x_{l(m, t)} }
\right)
\\
&=
\diag
 \left(
\sum_{j \in J_0} f_{0, j}  v_{x_{l(0, j)}} b v^*_{x_{l(0, j)}},
\ldots,
\sum_{j \in J_m} f_{m, j}  v_{x_{l(m, j)} } b  v^*_{x_{l(m, j)} }
\right).
\end{align*}
Therefore,
 using this at the first step,
 using (\ref{Eq191012t2qz}) at the third step,
 and using the second part (\ref{EQ5.191017}) at the last step, for all $z\in X$,
\begin{align*}
&
\left\| \left[\diag \left(\sum_{j \in J_0} f_{0,j} a,
 \ldots, \sum_{j \in J_m} f_{m, j} a \right) -  v\widetilde{b} v^*\right](z)\right\|
\\
&=
\left\| \left[
\diag \left(
\sum_{j \in J_0} f_{0,j}\left(a - v_{x_{l(0, j)}} b v^*_{x_{l(0, j)}}\right)
,\ldots,
\sum_{j \in J_m} f_{m,j} \left(a - v_{x_{l(m, j)}} b v^*_{x_{l(m, j)}}\right)
\right) \right](z)
\right\|
\\
&\leq
\sum_{k=0}^{m} \sum_{j \in J_k} f_{k,j}(z) \left\|a(z) - v_{x_{l(k, j)}}(z) b(z) v^*_{x_{l(k, j)}}(z)\right\|
\\
&<
\sum_{(k, j)\in \Lambda_z } f_{k,j}(z)\cdot \frac{\ep}{m+1}
\\
&\leq
\sum_{k\in \Gamma_m} \sum_{j \in J_k} f_{k,j}(z) \cdot \frac{\ep}{m+1} = \frac{\ep}{m+1}.
\end{align*}
This completes the proof of the claim.
Using the claim and Lemma~\ref{PhiB.Lem_18_4}(\ref{PhiB.Lem_18_4_10}), we get
\begin{equation}\label{Eq1.191017}
\left(\diag \left(\sum_{j \in J_0} f_{0,j} a,
 \ldots, \sum_{j \in J_m} f_{m, j} a \right)- \frac{\ep}{m+1} \right)_+
\precsim_{B}  v \widetilde{b} v^* \precsim_{B} \widetilde{b}.
\end{equation}
Therefore,
using the second part of (\ref{EQ5.191017}) at the first step,
using Lemma~\ref{PhiB.Lem_18_4}(\ref{PhiB.Lem_18_4_10.XV}) at the second step,
and using (\ref{Eq1.191017}) at the last step,
\begin{align*}
(a- \ep)_+
&= \left( \left(\sum_{k \in \Gamma_m} \sum_{j \in J_k} f_{k,j} \right) a - \ep \right)_+
\\
&\precsim_B
\left(  \sum_{j \in J_0} f_{0,j} a - \frac{\ep}{m+1} \right)_+
\oplus
\ldots
\oplus
\left(  \sum_{j \in J_m} f_{m,j} a - \frac{\ep}{m+1} \right)_+
\\
&=
\left(\diag \left(\sum_{j \in J_0} f_{0,j} a,
 \ldots, \sum_{j \in J_m} f_{m, j} a \right)- \frac{\ep}{m+1} \right)_+
 \precsim_B
\widetilde{b}.
\end{align*}
This completes the proof.
\end{proof}
The following corollary is  essentially immediate from Proposition~\ref{Prp_CXA_Cu_gener}.
\begin{cor}\label{Cor_CXA_Cu_Dim}
Let $X$ be a compact  metric space with $\dim (X)=0$. Let
$A$ be a unital \ca, let $l \in \N$, and let $a, b \in M_l(C(X, A))_+$.
Then  $a \precsim_{C(X, A)} b$ if and only if $a(x) \precsim_A b(x)$ for all $x\in X$.
\end{cor}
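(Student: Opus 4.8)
The plan is to prove the two implications separately, and to observe that only one of them actually uses the hypothesis $\dim(X) = 0$.

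For the forward implication, I would argue directly from the definition of Cuntz subequivalence and require nothing about $\dim(X)$. Suppose $a \precsim_{C(X, A)} b$. Then there is a sequence $(c_n)_{n=1}^{\infty}$ in $M_{\infty}(C(X, A))$ with $\lim_{n \to \infty} \| c_n b c_n^* - a \| = 0$. For a fixed $x \in X$, evaluation at $x$ is a $*$-homomorphism $M_{\infty}(C(X, A)) \to M_{\infty}(A)$ of norm at most $1$, so $\| c_n(x) b(x) c_n(x)^* - a(x) \| \leq \| c_n b c_n^* - a \| \to 0$. Since each $c_n(x) \in M_{\infty}(A)$, this exhibits $a(x) \precsim_A b(x)$.

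For the backward implication, I would simply invoke \Prp{Prp_CXA_Cu_gener} with $m = \dim(X) = 0$. That proposition gives $a \precsim_{C(X, A)} 1_{M_{m+1}} \otimes b$; since $m + 1 = 1$ and $1_{M_1} \otimes b = b$ under the standard identification $M_1(C(X, A)) = C(X, A)$, this is exactly $a \precsim_{C(X, A)} b$.

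I do not expect any real obstacle here: all the substantive work (the partition-of-unity argument exploiting the $m$-decomposable refinement) has already been carried out in \Prp{Prp_CXA_Cu_gener}, and the point of the corollary is precisely that the matrix amplification $1_{M_{m+1}} \otimes b$ in that proposition becomes vacuous when $m = 0$, which is what upgrades the one-directional statement to an equivalence. The only thing to state carefully is the trivial identification $1_{M_1} \otimes b = b$.
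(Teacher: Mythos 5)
Your proof is correct and matches the paper's intent exactly: the paper states the corollary is ``essentially immediate'' from Proposition~\ref{Prp_CXA_Cu_gener}, and you have supplied precisely the two routine observations that make it so --- evaluation at each point is a contractive $*$-homomorphism for the forward direction, and the $m=0$ case of the proposition (where the amplification $1_{M_{m+1}}\otimes b$ collapses to $b$) for the backward direction.
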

As an application of Corollary~\ref{Cor_CXA_Cu_Dim}, we
obtain the following corollary.
\begin{cor}\label{RC_Dim_0}
Let $X$ be a compact  metric space with $\dim (X)=0$ and
 let $A$ be a stably finite unital C*-algebra.
 Then $\rc (C(X)\otimes A) = \rc (A)$.
\end{cor}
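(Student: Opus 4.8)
The plan is to show the two inequalities $\rc(A) \leq \rc(C(X)\otimes A)$ and $\rc(C(X)\otimes A) \leq \rc(A)$ separately. The first inequality is already available: it is exactly Proposition~\ref{Prp.20199.02.17}, which holds for any compact metric space $X$ and any stably finite unital \ca{} $A$, so no hypothesis on $\dim(X)$ is needed there. Hence the real content is the reverse inequality $\rc(C(X)\otimes A) \leq \rc(A)$, and this is where $\dim(X) = 0$ enters through Corollary~\ref{Cor_CXA_Cu_Dim}.

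For the reverse inequality, set $B = C(X,A)$, which is stably finite unital by Lemma~\ref{Lem19.11.10}(\ref{Lem19.11.10.a}). We may assume $\rc(A) < \infty$. Fix $r \in [0,\infty)$ such that $A$ has $r$-comparison, let $l \in \N$, and let $a, b \in M_l(B)_+$ satisfy $d_\theta(a) + r < d_\theta(b)$ for all $\theta \in \QT(B)$. The goal is to deduce $a \precsim_B b$, and by Corollary~\ref{Cor_CXA_Cu_Dim} (applied to $M_l(C(X,A)) \cong C(X, M_l(A))$, using $\dim(X)=0$) it suffices to show $a(x) \precsim_{M_l(A)} b(x)$ for each $x \in X$. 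So fix $x \in X$. The idea is to produce, for each $2$-quasitrace $\tau$ on $M_l(A)$, a $2$-quasitrace $\theta$ on $B$ (equivalently on $M_l(B)$, after the usual normalization identifications) whose restriction to the fiber at $x$ recovers $\tau$, so that $d_\tau(a(x)) = d_\theta(a)$ and $d_\tau(b(x)) = d_\theta(b)$; then the hypothesis on $B$ transfers to give $d_\tau(a(x)) + r < d_\tau(b(x))$ for all $\tau \in \QT(M_l(A))$, and $r$-comparison for $A$ (hence for $M_l(A)$) yields $a(x) \precsim_{M_l(A)} b(x)$. Taking the infimum over such $r$ finishes the argument.

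The quasitrace one uses is the pullback $\theta = \tau \circ \ev_x$, where $\ev_x \colon C(X,M_l(A)) \to M_l(A)$ is evaluation at $x$; this is a point evaluation composed with a quasitrace, hence again a normalized $2$-quasitrace, and for any positive element $c \in M_n(C(X,M_l(A))_+$ one has $d_\theta(c) = \lim_n \theta(c^{1/n}) = \lim_n \tau(c(x)^{1/n}) = d_\tau(c(x))$ since evaluation at $x$ commutes with continuous functional calculus. This makes the transfer of the comparison inequality from fiber to total algebra (and back) completely routine. One minor bookkeeping point: the normalization of quasitraces and the passage between $\QT(A)$, $\QT(M_l(A))$, and $\QT(C(X,M_l(A)))$ should be handled via the standard identifications (as in the proof of Proposition~\ref{Prp.20199.02.17}, where exactly such a pullback construction $\tau_\theta$ is used, only in the reverse direction).

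The step I expect to be the main (though still modest) obstacle is verifying that point evaluations of quasitraces behave correctly with respect to $d_\tau$ and the $M_l$-amplifications in a way that matches Definition~\ref{rc_dfn} precisely — in other words, checking that the quantifier "for all $\theta \in \QT(B)$" in the hypothesis really does specialize, via $\theta = \tau \circ \ev_x$, to "for all $\tau \in \QT(M_l(A))$" at each fixed $x$, and that this is the same family of $\tau$'s relevant to $r$-comparison for $A$. Everything else — finiteness of $B$, the reduction to fiberwise comparison, and the final infimum — is immediate from Lemma~\ref{Lem19.11.10}, Corollary~\ref{Cor_CXA_Cu_Dim}, and Proposition~\ref{Prp.20199.02.17}, respectively.
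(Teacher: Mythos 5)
Your proposal is correct and follows essentially the same route as the paper: Proposition~\ref{Prp.20199.02.17} for the inequality $\rc(A)\leq\rc(C(X)\otimes A)$, and for the reverse inequality the pullback quasitraces $\rho\circ\ev_x$ to transfer the hypothesis $d_\theta(a)+r<d_\theta(b)$ to each fiber, followed by Corollary~\ref{Cor_CXA_Cu_Dim} to pass from $a(x)\precsim_A b(x)$ for all $x$ to $a\precsim_{C(X,A)}b$. The bookkeeping point you flag about $\QT(M_l(A))$ versus $\QT(A)$ is handled in the paper simply by using Definition~\ref{D_9422_dtau_dfn}, where $d_\tau$ is already defined on $M_\infty(A)_+$ for $\tau\in\QT(A)$, so no separate identification is needed.
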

\begin{proof}
Set $B=C(X, A)$.
By Proposition~\ref{Prp.20199.02.17}, it suffices to show $\rc (B) \leq \rc (A)$.
 We may assume $\rc(A) < \infty$.
 Let $r \in [0, \I)$ and suppose that $A$ has $r$-comparison.
Let $l \in \N$ and  let $a, b \in M_{l}(B)_+$.
Assume
\begin{equation}\label{EQ191012t1}
d_{\tau} (a) + r < d_{\tau} (b)
\end{equation}
 for all $\tau \in \QT (B)$.
  Let $\rho \in \QT (A)$. Then $\rho \circ \mathrm{ev}_x  \in \QT (B)$
 for all $x\in X$.
Therefore
$d_{\rho \circ \mathrm{ev}_x} (c) = d_{\rho} (c(x))$
for all $x\in X$, all $\rho \in \QT (A)$, and all $c \in B$.
Using this and (\ref{EQ191012t1}), we get
\[
d_{\rho} (a(x)) + r < d_{\rho} (b(x))
\]
for all $x\in X$ and all $\rho \in \QT(A)$.
Since $A$ has $r$-comparison, it follows that $a(x) \precsim_A b(x)$ for all $x \in X$.
 Then, by Corollary~\ref{Cor_CXA_Cu_Dim},
$a \precsim_{B} b$.
Therefore $\rc (B) \leq r$. Taking the infimum over $r \in [0, \I)$
such that $A$ has $r$-comparison,
we get $\rc (B) \leq \rc(A)$.
\end{proof}
Note that if the C*-algebra $A$ in Corollary \ref{RC_Dim_0} is
also  residually stably finite, then the result is  immediate from
Proposition~3.2.4(i) of \cite{BRTTW12},
Proposition~\ref{Prp.20199.02.17}, and the fact that $X= \invlim
X_j$ with $X_j$ a finite set for all $j$.

Corollary~\ref{RC_Dim_0} fails if $\dim (X) >0$.
For example, it is enough  to take $A=\mathbb{C}$ and
$X=[0, 1]^5$.

The following proposition is  a consequence of Proposition~3.2.4(ii) of \cite{BRTTW12}.
\begin{prp}\label{Prp_Z_st_r}
Let $X$ be a compact  metric space and  let $A$ be a residually stably finite $\mathcal{Z}$-stable unital C*-algebra.
 Then $\rc (C(X)\otimes A)=0$.
\end{prp}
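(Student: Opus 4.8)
The plan is to reduce the computation of $\rc\big(C(X)\otimes A\big)$ to the algebraic invariant $r_{(-)}$ attached to $\big(\Cu(-),\langle 1_{(-)}\rangle\big)$, to make the latter vanish using $\mathcal{Z}$-stability, and then to transfer the conclusion back to $\rc$. Throughout set $B=C(X)\otimes A$, identified with $C(X,A)$ in the usual way; since $C(X)$ is nuclear there is no ambiguity in the tensor product.

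First I would record the two structural facts about $B$ that make everything go. Since $A$ is residually stably finite it is in particular stably finite, so by Lemma~\ref{Lem19.11.10}(\ref{Lem19.11.10.a}) the algebra $B$ is stably finite and $\rc(B)$ is defined; moreover, because $X$ is a compact metric space, Lemma~\ref{Lem19.11.10}(\ref{Lem19.11.10.b}) gives that $B$ is residually stably finite. Consequently Proposition~\ref{BW.Prp.3.2.3} applies and yields $\rc(B)=r_B$ (as elements of $[0,\infty]$). Second, $B$ is $\mathcal{Z}$-stable: from $A\cong A\otimes\mathcal{Z}$ we get $B=C(X)\otimes A\cong C(X)\otimes A\otimes\mathcal{Z}\cong B\otimes\mathcal{Z}$.

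With these in hand the result follows by invoking Proposition~3.2.4(ii) of \cite{BRTTW12}, which (in the form we need) gives $r_D=0$ for every $\mathcal{Z}$-stable unital C*-algebra $D$; applied to $D=B$ this gives $r_B=0$, and therefore $\rc(B)=r_B=0$.

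I do not expect a genuine obstacle here: the argument is essentially a chain of identifications, and all of the substance is contained in the cited statements. The one point that truly requires care — and the reason Lemma~\ref{Lem19.11.10}(\ref{Lem19.11.10.b}) is the real starting point — is the passage from $r_B=0$ back to $\rc(B)=0$: the inequality $r_B\le\rc(B)$ is automatic because functionals on $\Cu(B)$ are allowed to take the value $\infty$ at the unit, but the reverse inequality needs residual stable finiteness of $B$, which is exactly what Lemma~\ref{Lem19.11.10}(\ref{Lem19.11.10.b}) supplies. (One could alternatively argue directly from almost unperforation of $\Cu(B)$, but routing through \cite{BRTTW12} is the shortest path.)
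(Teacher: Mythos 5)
Your proof is correct and follows essentially the same route as the paper: residual stable finiteness of $C(X)\otimes A$ from Lemma~\ref{Lem19.11.10}(\ref{Lem19.11.10.b}), the identification $\rc\big(C(X)\otimes A\big)=r_{C(X)\otimes A}$ from Proposition~\ref{BW.Prp.3.2.3}, and $\mathcal{Z}$-stability forcing $r_{C(X)\otimes A}=0$. The one place you are looser than the paper is in citing Proposition~3.2.4(ii) of \cite{BRTTW12} ``in the form we need'': that proposition is stated for \emph{simple} algebras and $C(X)\otimes A$ is not simple, so you should say explicitly (as the paper does) that only the implication from almost unperforation of $\Cu\big(C(X)\otimes A\big)$ --- which is supplied by Theorem~4.5 of \cite{Rdm7} for $\mathcal{Z}$-stable algebras --- to $r_{C(X)\otimes A}=0$ is being used, and that this direction does not require simplicity.
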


\begin{proof}
First of all, although  simplicity of $A$ was assumed in Proposition~3.2.4(ii) of \cite{BRTTW12},
it is not necessary for the backward implication.
That is, almost unperforation implies $r_A=0$ even when $A$ is not simple.

On the other hand,  $\Cu(C(X, A))$ is almost unperforated by
Theorem~4.5 of \cite{Rdm7}. Thus, $r_{C(X, A)}=0$.

Since $A$ is residually stably finite,
 it follows from Lemma~\ref{Lem19.11.10}(\ref{Lem19.11.10.b}) that
$C(X, A)$ is also residually stably finite.
Therefore, using Proposition~\ref{BW.Prp.3.2.3}, we get $\rc(C(X, A))=r_{C(X, A)}=0$.
\end{proof}
Combining Proposition~\ref{Prp.20199.02.17} and Proposition~\ref{Prp_Z_st_r}, we get the following corollary which is a generalization of Corollary~4.6 of \cite{Rdm7}. 
\begin{cor}\label{Cor_Z_st_r}
Let $A$ be a residually stably finite $\mathcal{Z}$-stable unital C*-algebra.
 Then $\rc (A)=0$.
\end{cor}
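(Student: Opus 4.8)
The plan is to derive this as a direct consequence of the two propositions just established. Suppose $A$ is a residually stably finite $\mathcal{Z}$-stable unital C*-algebra. First I would pick any compact metric space $X$; the natural choice is $X = [0,1]$, or even a point, but any $X$ works. The point is that $A$ itself embeds unitally in $C(X) \otimes A$ via $a \mapsto 1_{C(X)} \otimes a$, so Proposition~\ref{Prp.20199.02.17} applies.

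Concretely, the argument runs as follows. Since $A$ is residually stably finite, it is in particular stably finite, so Proposition~\ref{Prp.20199.02.17} gives $\rc(A) \leq \rc\big(C(X) \otimes A\big)$. On the other hand, $A$ is residually stably finite and $\mathcal{Z}$-stable, so Proposition~\ref{Prp_Z_st_r} gives $\rc\big(C(X) \otimes A\big) = 0$. Combining these two inequalities yields $\rc(A) \leq 0$, and since $\rc(A) \geq 0$ by definition (Definition~\ref{rc_dfn}), we conclude $\rc(A) = 0$.

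There is essentially no obstacle here: the corollary is pure bookkeeping once Propositions~\ref{Prp.20199.02.17} and~\ref{Prp_Z_st_r} are in hand. The only thing to be slightly careful about is that Proposition~\ref{Prp_Z_st_r} is stated for $C(X) \otimes A$ with $X$ a compact metric space, so one must explicitly name such an $X$ (take $X$ a one-point space, or $X = [0,1]$) rather than trying to invoke it for $A$ directly. I would also remark, as the text already signals, that this recovers Corollary~4.6 of \cite{Rdm7} (the simple case) without the simplicity hypothesis, at the cost of assuming residual stable finiteness in place of exactness.
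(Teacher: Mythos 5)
Your proof is correct and is exactly the paper's argument: the paper derives the corollary by combining Proposition~\ref{Prp.20199.02.17} with Proposition~\ref{Prp_Z_st_r}, just as you do. (Taking $X$ to be a one-point space, so that $C(X)\otimes A\cong A$, makes the reduction especially transparent, but any compact metric space works.)
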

In light of Proposition~\ref{Prp_Z_st_r} and Corollary~\ref{Cor_Z_st_r}, it is an important question
whether there exists a stably finite $\mathcal{Z}$-stable unital C*-algebra
with nonzero radius of comparison.
Such a C*-algebra can't be residually stably finite. We will give examples in the next section.
%
\section{A class of stably finite exact $\mathcal{Z}$-stable unital C*-algebra with nonzero radius of comparison}
It is  a result of R{\o}rdam \cite{Rdm7} that all stably finite exact simple $\mathcal{Z}$-stable  unital C*-algebras
 have strict comparison of positive elements.
To see that not all  $\mathcal{Z}$-stable unital C*-algebras
have strict comparison of positive elements, we exhibit
a class of stably finite exact $\mathcal{Z}$-stable unital C*-algebras
 with nonzero radius of comparison.
\subsection{Cones over C*-algebras}
Let $A$ be a C*-algebra. The
\emph{cone} over $A$, denoted $\Cone A$, is the set of
continuous functions $f \colon [0, 1] \to A$ with $f(0) = 0$.
 Clearly $\Cone A \cong C_0((0, 1]) \otimes A$.
If $A$ is unital, $(\Cone A)^+$ is isomorphic to  the
set of continuous functions $f \colon [0, 1] \to A$ such that $f(0)\in \mathbb{C} 1_A$.
For a nonunital C*-algebra $A$ we say that $A$ is stably finite
if $A^+$ is stably finite.
Then the cone over any C*-algebra is stably finite but often fails to be
residually stably  finite.
In general, we do not know whether
the tensor product of two (non-exact) stably finite simple C*-algebras is stably finite or not.
But we have the following nice lemma
for the case that one tensor factor is the unitization of the cone over a unital C*-algebra.
\begin{lem}
Let $A$ be a unital C*-algebra and let $B$ be a stably finite unital C*-algebra.
Then $(\Cone A)^+ \otimes B$ is stably finite for any choice of tensor product.
\end{lem}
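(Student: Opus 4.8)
The plan is to reduce the statement to a finiteness check on unitizations and then exploit the concrete description of $(\Cone A)^+$ given just above the lemma. Write $C = (\Cone A)^+$. By definition $C$ is the algebra of continuous functions $f\colon[0,1]\to A$ with $f(0)\in\mathbb{C}1_A$, and it is unital with unit the constant function $1_A$. Since $C\otimes B$ is unital (for any $C^*$-tensor product, because both factors are unital), by Lemma~5.1.2 of \cite{RLL00} it suffices to show that every element $z\in M_n(C\otimes B)$ with $z^*z = 1$ also satisfies $zz^* = 1$; equivalently, that $M_n(C\otimes B)$ is finite for every $n\in\N$. Passing to matrix amplifications and using $M_n\big(C\otimes B\big)\cong C\otimes M_n(B)$, together with the fact that $M_n(B)$ is again a stably finite unital $C^*$-algebra, we may assume $n=1$ and just prove that $C\otimes B$ itself is finite.

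First I would identify $C\otimes B$ with a subalgebra of $C([0,1], A\otimes B)$: an element is a continuous function $g\colon [0,1]\to A\otimes B$ with $g(0)\in \mathbb{C}1_A\otimes B \cong B$. (For the spatial tensor product this identification is standard; for the maximal tensor product one uses that $C\otimes_{\max} B \to C([0,1],A)\otimes_{\max}B$ is injective since $C\hookrightarrow C([0,1],A)$ is a split injection, being complemented by evaluation-type maps, and $-\otimes_{\max}B$ preserves such split injections.) Suppose $g\in C\otimes B$ satisfies $g^*g = 1$. Then $g(t)^*g(t) = 1_{A\otimes B}$ for every $t\in[0,1]$, and in particular $g(0)^*g(0) = 1_B$ inside $B$. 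Since $B$ is finite, $g(0)g(0)^* = 1_B$ as well.

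The key point is then an open-set argument to propagate fullness of $gg^*$ from $t=0$ to all of $[0,1]$. Let $U = \{t\in[0,1] : g(t)g(t)^* \text{ is invertible in } A\otimes B\}$. This set is open (invertibility is an open condition) and, by the previous paragraph, contains $0$. I claim $U$ is also closed, hence equal to $[0,1]$: the obstruction is that $A\otimes B$ need not be finite, so I cannot directly conclude, but I can argue as follows. For $t\in U$ the element $g(t)$ is both left-invertible (it has $g(t)^*$ as a left inverse since $g(t)^*g(t)=1$) and, being left-invertible with $g(t)g(t)^*$ invertible, it is in fact invertible; its inverse is $g(t)^*(g(t)g(t)^*)^{-1}$, and on $U$ this depends continuously on $t$. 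Consequently $\{g(t)^{-1} : t\in U\}$ is a continuous $A\otimes B$-valued function on $U$, and if $t_k\to t_\infty$ with $t_k\in U$, then $\|g(t_k) - g(t_\infty)\|\to 0$, so for large $k$ the element $g(t_\infty) = g(t_k)(1 + g(t_k)^{-1}(g(t_\infty)-g(t_k)))$ is invertible, giving $t_\infty\in U$. Hence $U$ is clopen and nonempty in the connected space $[0,1]$, so $U=[0,1]$, i.e. $g(t)g(t)^*$ is invertible for all $t$; combined with $g(t)^*(g(t)g(t)^*) = g(t)^*g(t)g(t)^* \cdot\! $ wait — more cleanly, left-invertibility plus right-invertibility of each $g(t)$ forces $g(t)g(t)^* = 1_{A\otimes B}$ for all $t$ (an element of a unital algebra that is both left- and right-invertible is invertible, and then $g(t)^{-1} = g(t)^*$ from $g(t)^*g(t)=1$). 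Therefore $gg^* = 1$ in $C\otimes B$, which proves $C\otimes B$ is finite.

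The main obstacle is the tensor-product bookkeeping in the first step: one must be careful that $(\Cone A)^+\otimes B$ sits inside $C([0,1], A\otimes B)$ for the chosen tensor product so that pointwise evaluation makes sense, since $A\otimes B$ itself is not assumed finite. Once that embedding is in place, the argument is a soft connectedness/open-condition argument and uses finiteness of $B$ only at the single point $t=0$, which is exactly where the $\mathbb{C}1_A$-constraint in the definition of $(\Cone A)^+$ lets the fibre collapse to $B$. I would also remark that the statement genuinely uses the $+$: the cone $\Cone A = C_0((0,1])\otimes A$ need not behave this way when tensored with $B$ without a unit to anchor the $t=0$ fibre, which is consistent with the sentence preceding the lemma about cones failing to be residually stably finite.
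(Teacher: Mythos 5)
Your overall strategy coincides with the paper's: reduce to a single matrix size, view an isometry $f$ in $(\Cone A)^+\otimes M_k(B)$ as a continuous path $t\mapsto f(t)$ of isometries, use finiteness of $B$ only at $t=0$ (where the constraint $f(0)\in 1_A\otimes M_k(B)$ collapses the fibre to $M_k(B)$), and then propagate unitarity over the connected interval. The paper implements the propagation by noting that $t\mapsto f(t)f(t)^*$ is a continuous path of projections starting at $1$ and invoking Lemma~10.1.7 of \cite{GKPT18} (projections at distance less than $1$ are unitarily equivalent, and a projection unitarily equivalent to $1$ equals $1$) along a sufficiently fine partition; your clopen-set argument is a variant of the same connectedness idea.

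Two points in your write-up need repair. First, the closedness step is incomplete as stated: for a general continuous path the invertibles are not closed, and the claim that $g(t_k)\bigl(1+g(t_k)^{-1}(g(t_\infty)-g(t_k))\bigr)$ is invertible for large $k$ requires a uniform bound on $\|g(t_k)^{-1}\|$, which continuity of $t\mapsto g(t)^{-1}$ on $U$ does not provide near a boundary point of $U$. The gap is closed by an observation you never make explicit: since $g(t)^*g(t)=1$, an invertible $g(t)$ is unitary, so $g(t)^{-1}=g(t)^*$ has norm $1$; equivalently, the set $\{t:\ g(t)\text{ is unitary}\}$ is closed simply because the unitary group is norm-closed, which streamlines the whole argument. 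Second, your parenthetical treatment of the maximal tensor product is not justified: the inclusion $(\Cone A)^+\hookrightarrow C([0,1],A)$ is not obviously split (there is no evident conditional expectation onto the subalgebra of functions with $f(0)\in\mathbb{C}1_A$), so injectivity of $(\Cone A)^+\otimes_{\max}B\to C([0,1],A)\otimes_{\max}B$ is not established. To be fair, the paper's own proof silently makes the same identification of an element of $(\Cone A)^+\otimes M_k(B)$ with an $A\otimes M_k(B)$-valued function, so you are no worse off than the source here; but you should either restrict to the minimal tensor product, where the embedding is automatic, or supply an actual argument for other C*-norms.
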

\begin{proof}
Let $k\in \N$ and let $f \in  (\Cone A)^+ \otimes M_k(B)$.
Suppose $f^* f= 1_{(\Cone A)^+} \otimes 1_{M_{k} (B)}$. We must show $f f^*=  1_{(\Cone A)^+} \otimes 1_{M_{k} (B)}$.
We may identify $f$ as a continuous function $f \colon [0, 1] \to A \otimes M_{k} (B)$
with $f(0)= 1_{A} \otimes b$ for some $b \in M_k (B)$.
So, it suffices to show that $f(x) f(x)^*= 1_{A} \otimes 1_{M_{k} (B)}$ for all $x \in [0, 1]$.

We note that
\[
1_A \otimes (b^* b) = f(0)^* f(0)=1_{A} \otimes 1_{M_{k} (B)}.
\]
This relation implies that $b^* b= 1_{M_{k} (B)}$. Since $B$ is stably finite, it follows that
$b b^*= 1_{M_{k} (B)}$. Therefore
\[
f(0) f(0)^*= 1_A \otimes (b b^*)= 1_{A} \otimes 1_{M_{k} (B)}.
\]
Now define a uniformly continuous map $\Delta \colon [0, 1] \to A \otimes M_{k}(B)$ by $\Delta  (x) =f(x) f(x)^*$.
 Clearly $\Delta (0)= 1_{A} \otimes 1_{M_{k} (B)}$ and
 $\Delta (x) \in \Proj \big(A \otimes M_{k}(B)\big)$ for all $x \in [0, 1]$.
Choose $\delta > 0$ such that
$\|\Delta (t) - \Delta (s)\| < 1$ whenever $|t - s|<\delta$ for $s, t \in [0, 1]$.
Choose $n \in \N$ and
the partition $\mathcal{P}= \{t_0=0,t_1, \ldots, t_n=1\}$ of $[0, 1]$ with
$\max_{1\leq j\leq n} |t_{j} - t_{j-1}|<\delta$.
 Then $\|\Delta (t_j) - \Delta (t_{j-1})\| < 1$ for $j = 1, \ldots, n$.
Using this, $\Delta (t_j)$ is projection for $j = 0, 1, \ldots, n$, and Lemma~10.1.7 of \cite{GKPT18},
 we get  $\Delta (x)= 1_{A} \otimes 1_{M_{k} (B)}$ for all $x \in [0, 1]$
 and therefore   $f^*(x) f(x)= 1_{A} \otimes 1_{M_{k} (B)}$.
\end{proof}
We let $\T (A)$ denote the set of normalized traces on a unital C*-algebra.
 To prepare the next proposition, we need the following remark.
\begin{rmk}\label{Rmk_191126}
Let $A$ be unital C*-algebra with $\tau|_{\Cone A}= 0$ for all $\tau \in \T ((\Cone A)^+)$.
We may identify $(\Cone A)^+$
as the set of continuous functions  $f \colon [0, 1] \to A$ such that $f(0)\in \mathbb{C} 1_{A}$.
Define $\rho \colon (\Cone A)^+ \to \mathbb{C}$ by
$\rho (f) = \lambda_f$ where $f(0)= \lambda_f 1_{A}$.
Clearly $\rho \in \T \big((\Cone A)^+\big)$. Now we claim that $\rho$ is the  only tracial state on $(\Cone A)^+$.

To prove the claim, let $\tau \colon (\Cone A)^+ \to \mathbb{C}$
be  a tracial state and $f\in (\Cone A)^+$. Since $\tau |_{\Cone
A} = 0$, we have
\[
\tau (f)= \tau \big(f - f(0) 1_{A}\big) + \tau \big(f(0) 1_{A}\big)= \rho (f).
\]
Hence, $\rho$ is the unique trace on $(\Cone A)^+$. Moreover,  let
$f \in M_l ((\Cone A)^+)_+$ for some $l \in \N$. Choose $f_{jk}
\in (\Cone A)^+$ for $j, k=1,\ldots, l$ with $f=
\big(f_{jk}\big)_{l\times l}$. Since $f_{jk} (0) \in \mathbb{C}
1_{A}$ for $j, k=1,\ldots, l$, it follows that
$f(0)=\big(\lambda_{jk} 1_A\big)_{l\times l}$. Set $\lambda_f =
\big(\lambda_{jk} \big)_{l\times l}$. Now we have
\begin{equation}\label{Eq1_191125}
d_{\Tr_l \otimes \rho} (f) = \lim_{n \to \infty} \left(\Tr_l \otimes \rho\right) (f^{1/n})
= \lim_{n \to \infty} \Tr_l (\lambda_{f}^{1/n})=
\rank (\lambda_f).
\end{equation}
\end{rmk}
It is known (see Theorem~5.11 of~\cite{Hag14}) that all 2-quasitraces on a unital exact \ca{} are traces.
\begin{prp} \label{Prp.C.O.Z}
Let $A$ be a  exact unital C*-algebra such that $\tau|_{\Cone A}$ is zero for all $\tau \in \T ((\Cone A)^+)$.
Let $B$ be a  stably finite exact unital C*-algebra.
Then
\[
\rc \big((\Cone A)^+\big)=
\rc \big((\Cone A)^+ \otimes_{\mathrm{min}} B\big)=\infty.
\]
\end{prp}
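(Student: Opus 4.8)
The plan is to show directly that, for every $r\in[0,\I)$, neither $(\Cone A)^+$ nor $(\Cone A)^+\otimes_{\mathrm{min}}B$ has $r$-comparison; since the radius of comparison is the infimum of those $r$ for which $r$-comparison holds, this forces both radii to be $\infty$. Write $D$ for either of the two algebras and $C$ for the corresponding ``fiber'' C*-algebra --- $C=A$ in the first case, $C=A\otimes_{\mathrm{min}}B$ in the second --- so that $D$ is naturally a C*-subalgebra of $C([0,1],C)$ consisting of the functions $f$ with $f(0)$ in a fixed unital C*-subalgebra $C_0\subseteq C$ (namely $\mathbb{C}1_A$, resp.\ $1_A\otimes B$), and for each $t\in(0,1]$ evaluation $f\mapsto f(t)$ is a unital $*$-homomorphism of $D$ onto $C$. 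Both choices of $D$ are stably finite (a cone over a C*-algebra is stably finite, and $(\Cone A)^+\otimes_{\mathrm{min}}B$ is stably finite by the lemma preceding Remark~\ref{Rmk_191126}) and exact (min tensor products, extensions and subalgebras of exact C*-algebras are exact, and $(\Cone A)^+$ is an extension of $\mathbb{C}$ by $\Cone A$), so by Theorem~5.11 of~\cite{Hag14} we have $\QT(D)=\T(D)$. The crucial point is that all tracial data on $D$ lives at the endpoint $0\in[0,1]$: for $D=(\Cone A)^+$ this is Remark~\ref{Rmk_191126}, giving $\QT(D)=\{\rho\}$ with $d_\rho(f)=\rank(\lambda_f)$ depending only on $f(0)$; for $D=(\Cone A)^+\otimes_{\mathrm{min}}B$ I would prove the analogue, that every tracial state of $D$ factors through the quotient map $\pi\colon D\to B$ given by evaluation at $0$ followed by the isomorphism $1_A\otimes B\cong B$, so that $d_\tau(f)=d_\omega(\pi(f))$ again depends only on $f(0)$.

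Granting this, fix continuous functions $\eta,\gamma\colon[0,1]\to[0,1]$ with $\eta(0)=0$, $\eta\equiv1$ on $[1/2,1]$, $\gamma(0)=1$, and $\gamma\equiv0$ on $[1/2,1]$, and for $N\in\N$ put $a(t)=\eta(t)1_C$ and $b(t)=\gamma(t)1_{M_N(C)}$. Since $a(0)=0\in C_0$ and $b(0)=1_{M_N(C)}\in M_N(C_0)$, we have $a\in D_+$ and $b\in M_N(D)_+$. Because $a(0)=0$, the previous paragraph gives $d_\tau(a)=0$ for all $\tau\in\QT(D)$, while because $b(0)$ is the scalar matrix $1_{M_N}$ over $C_0$, it gives $d_\tau(b)=N$ for all $\tau\in\QT(D)$; hence $d_\tau(a)+r<d_\tau(b)$ for all $\tau\in\QT(D)$ as soon as $N>r$. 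On the other hand $a\not\precsim_D b$: evaluation at $1$ is a $*$-homomorphism of $D$ onto $C$ and $*$-homomorphisms preserve Cuntz subequivalence, so $a\precsim_D b$ would force $1_C=a(1)\precsim_C b(1)=0$, i.e.\ $1_C=0$ in the nonzero C*-algebra $C$, which is absurd. Thus, taking $N=\lfloor r\rfloor+1$, the pair $(a,b)$ shows that $D$ fails $r$-comparison; as $r\in[0,\I)$ was arbitrary, $\rc(D)=\infty$ for both $D$.

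The only place the hypotheses are used essentially, and the step I expect to need the most care, is the trace identification for $D=(\Cone A)^+\otimes_{\mathrm{min}}B$: every tracial state $\sigma$ of $D$ must kill the ideal $\Cone A\otimes_{\mathrm{min}}B$. The plan is: composing $\sigma$ with the unital embedding $x\mapsto x\otimes1_B$ gives a tracial state of $(\Cone A)^+$, which by hypothesis vanishes on $\Cone A$, so $\sigma(f\otimes1_B)=0$ for all $f\in\Cone A$; for $f\in(\Cone A)_+$ and $b\in B_+$ with $\|b\|\le1$ the operator inequality $0\le f\otimes b\le f\otimes1_B$ forces $\sigma(f\otimes b)=0$, whence, writing general elements as linear combinations of positive elements and using continuity of $\sigma$, we get $\sigma|_{\Cone A\otimes_{\mathrm{min}}B}=0$; finally exactness of $B$ identifies $D/(\Cone A\otimes_{\mathrm{min}}B)$ with $\mathbb{C}\otimes_{\mathrm{min}}B\cong B$ via $\pi$, and $d_\tau(f)=d_\omega(\pi(f))$ holds automatically because $\pi$ is a $*$-homomorphism. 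Everything else is soft; the three hypotheses ``$A$ exact'', ``$B$ exact'', and ``$\tau|_{\Cone A}=0$ for all $\tau\in\T((\Cone A)^+)$'' enter exactly at, respectively, the coincidence of quasitraces with traces, the identification of the quotient of $D$ with $B$, and this last trace computation.
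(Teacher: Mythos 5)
Your proposal is correct and follows essentially the same strategy as the paper's proof: all (quasi)traces of $(\Cone A)^+$ and of $(\Cone A)^+\otimes_{\mathrm{min}}B$ see only the fibre at $t=0$, so one exhibits a pair of positive elements with an arbitrarily large trace gap at $t=0$ whose Cuntz subequivalence is ruled out by evaluating at $t=1$. The only differences are cosmetic: your witnesses are chosen so that $d_\tau(a)=0$ outright rather than $1$, and your argument that every trace on the tensor product kills $\Cone A\otimes_{\mathrm{min}}B$ and factors through evaluation at $0$ is a (somewhat more carefully justified) substitute for the paper's identification $\T\big((\Cone A)^+\otimes_{\mathrm{min}}B\big)=\{\rho\otimes\tau:\tau\in\T(B)\}$, which it asserts as easy to check.
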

\begin{proof}
First we prove $\rc \big((\Cone A)^+\big)= \infty$. For every $r\in [0, \infty)$,
we must find $f, g \in M_{\infty} ((\Cone A)^+)_+$ such that
$
d_{\rho} (f) + r < d_{\rho} (g)$ and  $f \npreceq g$
where $\rho$ is as in Remark~\ref{Rmk_191126}.

 The largest integer which is less than or equal to $r$ is denoted by $\lfloor r \rfloor$.
Set $n= \lfloor r \rfloor + 2$.
Define $f_0, g_0 \colon [0, 1] \to [0, \infty)$ by
\[
f_0 (t)= 1
\qquad \text{ and } \qquad
g_0 (t)=
\begin{cases}
- 4 t +2 & 0 \leq t\leq 1/2\\
0  & 1/2 \leq t\leq 1.
\end{cases}
\]
We let $e_{j, k} \in M_n$ be the standard matrix units.
Then set $f(t)= f_0 (t) 1_{A} \otimes e_{1, 1}$
 and $g(t)= g_0 (t) 1_{A} \otimes 1_{M_n}$ for $t \in [0, 1]$.
Clearly $f, g \in M_n ((\Cone A)^+)_+$. By (\ref{Eq1_191125}), we have
$d_{\rho} (f)=1$ and $d_{\rho} (g)=n$. Therefore
\[
d_{\rho} (f) + r = 1 + r < \lfloor r \rfloor + 2 = n  = d_{\rho} (g).
\]
Now assume $f \precsim_{(\Cone A)^+} g$. Choose $v \in M_n ((\Cone A)^+)$ such that
$\| f - v g v^* \|< \tfrac{1}{2}$.
Then
\[
1=  \left\| f(1) - v(1) g(1) v^*(1)  \right\|\leq \| f - v g v^* \|< \tfrac{1}{2}.
\]
This is a contradiction.

Now we show $\rc \big((\Cone A)^+ \otimes_{\mathrm{min}} B\big) = \infty$.
The proof is similar to the first part,
except that we now find
 $\widetilde{f}$ and $\widetilde{g}$ instead of $f$ and $g$.
Since $(\Cone A)^+$ has a unique tracial state $\rho$,
it is easy to check that
\begin{equation} \label{EQ1.191213}
\T \big((\Cone A)^+ \otimes_{\mathrm{min}} B\big) =\big\{ \rho \otimes \tau \colon \tau \in  \T (B) \big\}.
\end{equation}
Set $\widetilde{f}= f \otimes 1_B$ and $\widetilde{g}= g \otimes 1_B$.
Clearly $\widetilde{f}, \widetilde{g} \in \big( M_{n} ((C A)^+) \otimes_{\mathrm{min}} B \big)_+$.
Using (\ref{Eq1_191125}) and (\ref{EQ1.191213}),
we get, for all $\theta \in \T \big((\Cone A)^+ \otimes_{\mathrm{min}} B\big)$,
\[
d_{\theta} (\widetilde{f})= d_{\rho} (f)=1
\quad \text{and}  \quad
d_{\theta} (\widetilde{g})=d_{\rho} (g)=n.
\]
Then a similar calculation gives us a similar contradiction as in the first part.
\end{proof}
We refer to Section~4 of \cite{KR00} for the definition of a  purely infinite C*-algebra and its properties.
\begin{cor}\label{Cor.C.O.Z}
Let $A$ be a purely infinite exact simple  unital C*-algebra and
$B$ be a stably finite exact unital C*-algebra. Then
\[
\rc \big((\Cone A)^+\big)=
\rc \big((\Cone A)^+ \otimes_{\mathrm{min}} B\big)=\infty.
\]
\end{cor}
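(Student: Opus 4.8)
The plan is to deduce the corollary directly from \Prp{Prp.C.O.Z}. Since $A$ is exact and $B$ is a stably finite exact unital \ca{} by hypothesis, the only hypothesis of \Prp{Prp.C.O.Z} that is not immediate is that $\tau|_{\Cone A} = 0$ for every $\tau \in \T\big((\Cone A)^+\big)$. Once this is verified, \Prp{Prp.C.O.Z} applies verbatim and yields $\rc\big((\Cone A)^+\big) = \rc\big((\Cone A)^+ \otimes_{\mathrm{min}} B\big) = \infty$. So the whole argument reduces to showing that a cone over a purely infinite simple unital \ca{} carries no tracial mass.

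To check the trace condition, I would use that a purely infinite simple unital \ca{} $A$ has properly infinite unit: there are $s_1, s_2 \in A$ with $s_1^* s_1 = s_2^* s_2 = 1_A$ and $s_1 s_1^* + s_2 s_2^* \leq 1_A$. Identify $\Cone A$ with $C_0((0,1], A)$ and fix $\tau \in \T\big((\Cone A)^+\big)$; note $\tau$ is positive and $\Cone A$ sits as an ideal in $(\Cone A)^+$, so $g^{1/2} \otimes s_i$ and $g \otimes c$ all lie in $\Cone A$ for $g \in C_0((0,1])_+$ and $c \in A_+$. First I would show $\tau(g \otimes 1_A) = 0$ for every $g \in C_0((0,1])_+$: writing $g \otimes 1_A = (g^{1/2}\otimes s_i)^*(g^{1/2}\otimes s_i)$ and applying the trace property gives
\[
\tau(g \otimes 1_A) = \tau\big((g^{1/2}\otimes s_i)(g^{1/2}\otimes s_i)^*\big) = \tau(g \otimes s_i s_i^*) \qquad (i = 1, 2),
\]
whence $2\,\tau(g\otimes 1_A) = \tau\big(g \otimes (s_1 s_1^* + s_2 s_2^*)\big) \leq \tau(g \otimes 1_A)$ by positivity of $\tau$ together with $g \geq 0$ and $s_1 s_1^* + s_2 s_2^* \leq 1_A$; thus $\tau(g\otimes 1_A) \leq 0$, and also $\geq 0$, forcing $\tau(g\otimes 1_A) = 0$.

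Next, given an arbitrary $h \in (\Cone A)_+ \cong C_0((0,1], A)_+$, put $g(t) = \|h(t)\|$; this $g$ is continuous, vanishes at $0$, hence lies in $C_0((0,1])_+$, and $h(t) \leq g(t) 1_A$ for all $t$, i.e. $0 \leq h \leq g \otimes 1_A$. Positivity of $\tau$ then gives $0 \leq \tau(h) \leq \tau(g \otimes 1_A) = 0$, and since every element of $\Cone A$ is a linear combination of positive elements of $\Cone A$, we conclude $\tau|_{\Cone A} = 0$; then \Prp{Prp.C.O.Z} gives the claim. The argument has essentially no obstacle: the only point to watch is that $\Cone A$ is neither unital nor simple, so pure infiniteness enters only through proper infiniteness of $1_A$ inside $A$, and the passage from elementary tensors to general positive elements of the cone is handled by the norm-domination trick above. (Alternatively, one could cite the standard fact that a purely infinite simple \ca{} admits no nonzero tracial functional and apply it to the hereditary structure of $C_0((0,1], A)$, but the direct computation is self-contained.)
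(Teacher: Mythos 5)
Your proposal is correct and follows exactly the route the paper intends: the corollary is stated without proof precisely because it reduces to checking the hypothesis $\tau|_{\Cone A}=0$ of \Prp{Prp.C.O.Z}, which holds since a purely infinite simple unital \ca{} has properly infinite unit and hence supports no tracial mass on its cone. Your explicit verification via the isometries $s_1, s_2$ and the norm-domination $0 \leq h \leq g \otimes 1_A$ with $g(t)=\|h(t)\|$ is sound and simply makes self-contained what the paper leaves implicit.
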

\begin{rmk} \label{Rm_191218}
 Applying Corollary~\ref{Cor.C.O.Z} with $\mathcal{Z}$ in place of $B$, we get
$\rc ((\Cone A)^+ \otimes \mathcal{Z} )= \infty$. More generally,
if the C*-algebra $B$ in Corollary~\ref{Cor.C.O.Z} is also  $\mathcal{Z}$-stable,
then we can get a class of stably finite exact $\mathcal{Z}$-stable unital C*-algebras with nonzero radius of comparison.
\end{rmk}
\section{A class of AH algebras with particular radius of comparison}
\label{Sec_Drr}
In this section, we give a class of AH algebras satisfying Conjecture~\ref{Phi_Conjecture}.
 As preparation, we prove that $\drr (A) = \drr \big(C(X) \otimes A \big)$
for any unital AH algebra $A$ with large matrix sizes.

The dimension-rank ratio of an AH algebra is related to its radius of comparison by the following lemma.
\begin{lem}\label{rc_half_drr}
Let $A$
be a unital AH algebra.
Then
  $\rc (A) \leq \frac{1}{2} \drr (A)$.
\end{lem}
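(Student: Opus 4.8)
The plan is to reduce to the homogeneous building blocks and then invoke the estimates already recorded in Remark~\ref{Rmk_190903}, using the behavior of $\rc$ under direct limits and finite direct sums. First I would fix a decomposition of $A$ as a unital AH algebra, say $A = \dirlim (A_j, \psi_j)$ with
\[
A_j = \bigoplus_{l=1}^{m_j} p_{j,l}\big(C(X_{j,l}) \otimes \cK\big) p_{j,l},
\]
and observe that since $A$ is an AH algebra, it is in particular residually stably finite (each $A_j$ is a type~I, hence residually stably finite, C*-algebra, and this passes to direct limits with unital connecting maps by the argument behind Proposition~\ref{Prp_Quot}). This lets me apply Proposition~\ref{Prp_Quot}(\ref{Prp_Quot.b}) to get $\rc(A) \leq \liminf_{j\to\infty} \rc(A_j)$.

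Next I would estimate $\rc(A_j)$ for a fixed $j$. By Lemma~\ref{Prp2.2.Tom06}(\ref{Prp2.2.Tom06.b})-style reasoning applied to the radius of comparison — more precisely, $\rc$ of a finite direct sum is the maximum of the $\rc$'s of the summands, which again follows from Proposition~3.2.4 of \cite{BRTTW12} via Proposition~\ref{BW.Prp.3.2.3} — we have
\[
\rc(A_j) = \max_{1 \leq l \leq m_j} \rc\Big(p_{j,l}\big(C(X_{j,l}) \otimes \cK\big)p_{j,l}\Big).
\]
Then Remark~\ref{Rmk_190903} gives, for each $l$,
\[
\rc\Big(p_{j,l}\big(C(X_{j,l}) \otimes \cK\big)p_{j,l}\Big)
\leq \max\!\left(0, \frac{\dim(X_{j,l}) - 1}{2\,\rank(p_{j,l})}\right)
\leq \frac{1}{2}\cdot \frac{\dim(X_{j,l})}{\rank(p_{j,l})}.
\]
Combining these, $\rc(A_j) \leq \tfrac{1}{2}\max_{1\leq l \leq m_j}\frac{\dim(X_{j,l})}{\rank(p_{j,l})}$, and hence
\[
\rc(A) \leq \liminf_{j\to\infty} \rc(A_j)
\leq \frac{1}{2}\liminf_{j\to\infty}\max_{1\leq l \leq m_j}\frac{\dim(X_{j,l})}{\rank(p_{j,l})}
\leq \frac{1}{2}\limsup_{j\to\infty}\max_{1\leq l \leq m_j}\frac{\dim(X_{j,l})}{\rank(p_{j,l})}.
\]
Taking the infimum over all decompositions of $A$ for which this last $\limsup$ is finite yields $\rc(A) \leq \tfrac{1}{2}\drr(A)$; and if no such decomposition exists, $\drr(A) = \infty$ and the inequality is trivial.

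The main obstacle I anticipate is the bookkeeping at the direct-limit step: Proposition~\ref{Prp_Quot}(\ref{Prp_Quot.b}) is stated for a fixed inductive system, but $\drr(A)$ is an infimum over \emph{all} decompositions, so I need to be careful that for each decomposition realizing a value close to $\drr(A)$, I genuinely get a corresponding inductive system to feed into Proposition~\ref{Prp_Quot}(\ref{Prp_Quot.b}). Since a decomposition of $A$ \emph{is} by definition an inductive system $A = \dirlim(A_j, \psi_j)$ with each $A_j$ strictly semihomogeneous and the $\psi_j$ unital, this is automatic, and the only subtlety is confirming that $A$ — and hence each $A_j$, which is visibly subhomogeneous — is residually stably finite so that Proposition~\ref{Prp_Quot} applies. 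A secondary point is justifying $\rc(A_j) = \max_l \rc(\text{summand}_l)$ for a finite direct sum; the inequality $\geq$ comes from Proposition~\ref{Prp_Quot}(\ref{Prp_Quot.a}) applied to the quotient maps onto each summand, and $\leq$ from a direct check that $r$-comparison of each summand gives $r$-comparison of the direct sum (a positive element of $M_\infty$ of a direct sum is a direct sum of positive elements, and quasitraces separate the summands). Neither presents a real difficulty, so the proof is essentially an assembly of Proposition~\ref{Prp_Quot}, the direct-sum formula for $\rc$, and Remark~\ref{Rmk_190903}.
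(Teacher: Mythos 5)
Your proposal is correct and follows essentially the same route as the paper's proof: fix an arbitrary decomposition, use residual stable finiteness of AH algebras to apply Proposition~\ref{Prp_Quot}(\ref{Prp_Quot.b}), reduce to the summands via the direct-sum formula for $\rc$ (the paper cites Proposition~6.2(i) of \cite{Tom06} for this), bound each summand by Remark~\ref{Rmk_190903}, and take the infimum over decompositions. The only differences are in which references are invoked for the two auxiliary facts, not in the argument itself.
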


\begin{proof}
We may assume $\drr(A) < \I$.
Suppose $\dirlim A_k$ is an arbitrary decomposition for $A$, where
\[
A_k  = \bigoplus_{l=1}^{m_k}
 p_{k,l} \big ( C(X_{k,l})\otimes \cK  \big) p_{k,l}
 \]
for compact Hausdorff spaces $X_{k,l}$, projections
 $p_{k,l} \in  C(X_{k,l})\otimes \cK $,
  and  $m_k \in \N$. Set
\begin{equation}
\label{Eq3.190930}
 r = \limsup \limits_{k\to \infty} \max_{1 \leq l \leq m_k}
 \left ( \frac{\dim (X_{k,l}) }{\rank (p_{k,l})}\right )
\end{equation}
which is a nonnegative real number or $\infty$.
Using Proposition~6.2(i) of \cite{Tom06} at the first step and
 \Rmk{Rmk_190903} at the second step, we get, for every $k$,
\begin{equation}\label{Eq7.190903}
\rc (A_k)
=
 \max_{ 1\leq l\leq m_k} \rc \left(p_{k,l} \big ( C(X_{k,l})\otimes \cK  \big) p_{k,l}\right)
 \leq
 \max_{ 1\leq l\leq m_k} \left( \frac{\dim (X_{k,l}) }{2 \,\rank (p_{k,l})} \right).
\end{equation}
It follows easily from the discussion after Definition~V.2.1.9 of \cite{BlEC} that
 AH algebras are residually stably finite.
Therefore, using Proposition~\ref{Prp_Quot}(\ref{Prp_Quot.b}) at the first step,
 using (\ref{Eq7.190903}) at the second step,
 and using (\ref{Eq3.190930}) at the last step,
\begin{align*}
\rc (A )
&\leq
\liminf_{j \to \infty} \rc (  A_{j} )
\leq
\liminf_{j \to \infty} \max_{ 1\leq l\leq m_j} \left( \frac{\dim (X_{j,l}) }{2 \,\rank (p_{j,l})} \right)
\\\notag
&\leq
\frac{1}{2} \limsup_{j \to \infty} \max_{ 1\leq l\leq m_j} \left( \frac{\dim (X_{j,l}) }{\rank (p_{j,l})} \right)
=  \frac{r}{2}.
\end{align*}
Using this and taking infimum over all $r$ with
$r= \limsup \limits_{k\to \infty} \max_{1 \leq l \leq m_k}
 \left ( \frac{\dim (X_{k,l}) }{\rank (p_{k,l})}\right )$, we get
$
 \rc (A) \leq \frac{1}{2} \drr (A).
$
\end{proof}
There is an example in which
 $A$ doesn't have large matrix sizes and the equality
$\rc (A) = \frac{1}{2} \drr (A)$
fails. For example, it is enough to take
 $A= C([0, 1]^6)$.

We now give a more precise statement of Proposition~6.8 of \cite{Tom06}.
\begin{cor}\label{Cor_AH_Cons1}
For every $r \in [0, \infty)$, there exists a simple unital AH algebra such that $\rc (A)=\frac{1}{2} \drr (A)=r$.
\end{cor}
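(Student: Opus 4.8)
The plan is to construct, for a given $r \in [0,\infty)$, a simple unital AH algebra $A$ realizing $\rc(A) = \frac{1}{2}\drr(A) = r$ by building a suitable AH direct limit by hand. First I would recall the construction from Tomforde's Proposition~6.8 of~\cite{Tom06}: one builds $A$ as a direct limit $\dirlim(A_k, \psi_k)$ with each $A_k = C(X_k) \otimes M_{n_k}$ for spheres $X_k = S^{d_k}$ (or products of spheres), choosing the dimensions $d_k$ and matrix sizes $n_k$ so that $\frac{d_k}{n_k}$ converges to $r$ from above along the tail, while arranging the connecting maps $\psi_k$ to be of the standard ``point evaluation plus a twist'' form that makes the limit simple. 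The existing literature already gives $\drr(A) = r$ for an appropriate such choice; the new content of this corollary over Tomforde's statement is the precise equality $\rc(A) = \frac{1}{2}\drr(A)$, and for that I need the matching lower bound $\rc(A) \geq \frac{r}{2}$, since Lemma~\ref{rc_half_drr} already supplies $\rc(A) \leq \frac{1}{2}\drr(A) = \frac{r}{2}$.

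The key steps, in order, would be: (1) fix the building blocks $A_k = C\bigl((S^2)^{m_k}\bigr) \otimes M_{n_k}$ (products of $2$-spheres are convenient because their rational cohomological dimension equals their covering dimension $2m_k$, the vector bundle obstruction theory is transparent, and Chern-class computations give nontrivial elements of the Cuntz semigroup); choose $m_k, n_k \to \infty$ with $\frac{2m_k}{n_k} \downarrow r$ and $m_k/n_k$ eventually below $r+\epsilon$ for every $\epsilon$; (2) define unital injective connecting maps $\psi_k \colon A_k \to A_{k+1}$ that are diagonal with point-evaluation summands together with enough ``spreading'' of evaluation points to force simplicity of the limit, and with multiplicities chosen so that the dimension-rank ratios of later stages do not drop below $r$ — concretely, keeping a distinguished summand of $A_{k+1}$ whose restriction sees a full copy of $A_k$ tensored against a sphere factor of dimension growing in the right proportion; (3) verify $\drr(A) = r$: the upper bound $\drr(A) \leq r$ is Lemma~\ref{Prp2.2.Tom06}(\ref{Prp2.2.Tom06.d}) applied to the chosen decomposition, and the lower bound uses the standard fact (from~\cite{Tom06}) that a nontrivial characteristic-class obstruction surviving into the limit forces $\drr$ up; (4) verify $\rc(A) \geq \frac{r}{2}$ by exhibiting, for each $s < r$, positive elements $a, b \in M_\infty(A)_+$ (images of carefully chosen projections / positive elements from some $A_k$, built from a trivial bundle of large rank versus a nontrivial bundle whose Euler class is an obstruction to the required subequivalence) with $d_\tau(a) + \frac{s}{2} < d_\tau(b)$ for all $\tau \in \QT(A)$ yet $a \not\precsim_A b$; the obstruction persists because the connecting maps were chosen to preserve the relevant $K$-theoretic / cohomological data. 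Finally, combine (3), (4), and Lemma~\ref{rc_half_drr} to conclude $\rc(A) = \frac{r}{2} = \frac{1}{2}\drr(A) = r$. For $r = 0$ one can instead just cite that a simple unital AH algebra with $\drr = 0$ exists (e.g.\ a simple AF or UHF-like AH model), where both sides are $0$.

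The main obstacle I expect is step (4): producing the pair $a, b$ witnessing $\rc(A) \geq \frac{r}{2}$ in the \emph{limit} $A$ rather than in a single $A_k$, and checking the non-subequivalence survives \emph{all} the connecting maps. In a fixed homogeneous block $p(C(X)\otimes\cK)p$ the lower bound for $\rc$ comes from the fact that a trivial rank-$n$ bundle with $n$ somewhat less than $\rank(p)$ need not embed in $p$ when $\dim(X)$ is large relative to $n$ (this is the $\dim(X)/2$-type obstruction underlying Theorem~\ref{Thm_El_Ni_C(X)} and Remark~\ref{Rmk_190903}). Transporting this into the direct limit requires that the connecting maps $\psi_k$ not kill the obstruction — i.e.\ that they be injective on the relevant portion of the Cuntz semigroup / cohomology — which is exactly why the maps in step~(2) must be built with a distinguished ``non-collapsing'' summand, and one must check the dimension growth in that summand keeps pace with $r$ so the obstruction bound does not degrade below $\frac{s}{2}$ in the limit. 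This is precisely the balancing act already carried out in~\cite{Tom06} for $\drr$; here it must be done simultaneously for $\rc$ and $\drr$, using Lemma~\ref{rc_half_drr} to get one inequality for free so that only the lower bound needs this delicate argument.
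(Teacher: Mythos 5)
Your proposal is essentially the paper's proof: the paper simply cites Proposition~6.8 of \cite{Tom06} (whose content is exactly the Villadsen-type construction you sketch --- due to Toms, not Tomforde) for the hard lower bound on the radius of comparison, and combines it with Lemma~\ref{rc_half_drr} for the reverse inequality, exactly as you do. The only defect is a factor-of-two normalization slip in your conclusion ($\rc(A)=\tfrac{r}{2}=\tfrac{1}{2}\drr(A)=r$ is inconsistent unless $r=0$): with your choice $\tfrac{2m_k}{n_k}\downarrow r$ you prove the statement with $r$ replaced by $\tfrac{r}{2}$, so you should instead arrange $\tfrac{2m_k}{n_k}\downarrow 2r$.
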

\begin{proof}
Combine Proposition~6.8 of \cite{Tom06} and Lemma \ref{rc_half_drr}.
\end{proof}
\begin{prp}\label{Prp_drr_stabl}
Let $A$
be a unital AH algebra with large matrix sizes and let
  $X$ be a compact metric space.
  Then
  $\drr (A) = \drr \big(C(X) \otimes A \big)$.
\end{prp}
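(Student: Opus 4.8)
The plan is to prove the two inequalities $\drr(A) \leq \drr(C(X)\otimes A)$ and $\drr(C(X)\otimes A) \leq \drr(A)$ separately. The first is the easy direction and does not use large matrix sizes: the evaluation map $\ev_{x_0}\colon C(X)\otimes A \to A$ at a point $x_0 \in X$ is a unital surjection, so $C(X)\otimes A$ has $A$ as a quotient, and since both are AH algebras, \Lem{Prp2.2.Tom06}(\ref{Prp2.2.Tom06.a}) gives $\drr(A) = \drr\bigl((C(X)\otimes A)/\ker \ev_{x_0}\bigr) \leq \drr(C(X)\otimes A)$. (One should note $C(X)\otimes A$ is indeed AH: if $A = \dirlim A_j$ with $A_j$ strictly semihomogeneous, then $C(X)\otimes A = \dirlim C(X)\otimes A_j$, and $C(X)\otimes p(C(Y)\otimes\cK)p \cong (1\otimes p)(C(X\times Y)\otimes\cK)(1\otimes p)$ is again strictly homogeneous of the same constant rank.)

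\emph{The hard direction is} $\drr(C(X)\otimes A) \leq \drr(A)$, and this is where large matrix sizes enters. The idea: fix a decomposition $A = \dirlim(A_j,\psi_j)$ with $A_j = \bigoplus_{l=1}^{m_j} p_{j,l}(C(X_{j,l})\otimes\cK)p_{j,l}$ realizing $\drr(A)$ up to $\varepsilon$ and simultaneously with $\min_l \rank(p_{j,l}) \to \infty$. Tensoring with $C(X)$ gives $C(X)\otimes A = \dirlim C(X)\otimes A_j$, where $C(X)\otimes A_j = \bigoplus_{l=1}^{m_j} (1\otimes p_{j,l})\bigl(C(X\times X_{j,l})\otimes\cK\bigr)(1\otimes p_{j,l})$, a strictly semihomogeneous algebra whose $l$-th block has base space $X\times X_{j,l}$ and constant rank $\rank(p_{j,l})$. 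By \Lem{Prp2.2.Tom06}(\ref{Prp2.2.Tom06.d}), $\drr(C(X)\otimes A) \leq \liminf_j \drr(C(X)\otimes A_j) = \liminf_j \max_l \frac{\dim(X\times X_{j,l})}{\rank(p_{j,l})}$. Now one invokes the subadditivity of covering dimension, $\dim(X\times X_{j,l}) \leq \dim(X) + \dim(X_{j,l})$, so the bound becomes $\liminf_j \max_l \bigl(\frac{\dim(X)}{\rank(p_{j,l})} + \frac{\dim(X_{j,l})}{\rank(p_{j,l})}\bigr)$. The term $\frac{\dim(X)}{\rank(p_{j,l})}$ is bounded by $\frac{\dim(X)}{\min_l \rank(p_{j,l})}$, which tends to $0$ precisely because $A$ has large matrix sizes; the remaining term has $\limsup_j \max_l \frac{\dim(X_{j,l})}{\rank(p_{j,l})} \leq \drr(A)+\varepsilon$. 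Combining and letting $\varepsilon \to 0$ yields $\drr(C(X)\otimes A) \leq \drr(A)$.

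\emph{The main obstacle I anticipate} is bookkeeping rather than conceptual: one must be careful that the decomposition realizing $\drr(A)$ can be chosen to also have large matrix sizes — i.e. that the infimum defining $\drr(A)$ and the large-matrix-sizes condition can be witnessed by a \emph{single} decomposition. This should follow by a standard telescoping/passing-to-a-subsequence argument: given any decomposition with large matrix sizes (which exists by hypothesis) and any decomposition nearly realizing $\drr(A)$, one interleaves them into a common refinement, though one must check that telescoping does not destroy either property. A secondary point to handle cleanly is the case $\drr(A) = \infty$, where the inequality $\drr(C(X)\otimes A) \leq \drr(A)$ is trivial, and the case where the relevant set of reals is empty; and one should confirm that $\dim X < \infty$ is automatic here (a compact metric space appearing with an AH algebra of finite $\drr$ forces no constraint, but if $\dim X = \infty$ the statement still holds since the right side would need separate thought — in fact when $\drr(A)<\infty$, large matrix sizes plus $\dim X = \infty$ needs $\dim(X\times X_{j,l})$ finite, which it is since each $X_{j,l}$ and each finite product with the relevant compact pieces is finite-dimensional in the AH setting — this is worth a remark). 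Modulo these routine verifications, the proof is the two-inequality argument sketched above.
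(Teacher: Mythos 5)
Your two-inequality strategy and the key estimates (evaluation at a point plus Lemma~\ref{Prp2.2.Tom06}(\ref{Prp2.2.Tom06.a}) for the easy direction; Lemma~\ref{Prp2.2.Tom06}(\ref{Prp2.2.Tom06.d}), subadditivity of covering dimension, and large matrix sizes killing the $\dim(X)/\rank(p_{j,l})$ term for the hard direction) are exactly those of the paper's proof. The genuine gap is your treatment of infinite-dimensional $X$. The proposition is asserted for an arbitrary compact metric space, and when $\dim(X)=\infty$ your bound $\dim(X\times X_{j,l})\leq \dim(X)+\dim(X_{j,l})$ is vacuous; worse, your parenthetical claim that $\dim(X\times X_{j,l})$ is still finite in that case is false, since $X\times\{y\}$ is a closed subspace of $X\times X_{j,l}$ homeomorphic to $X$, so $\dim(X\times X_{j,l})\geq\dim(X)=\infty$. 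The paper avoids this by writing $X=\invlim X_j$ with $\dim(X_j)<\infty$ for all $j$ (Corollary~5.2.6 of \cite{SkiK}), realizing $C(X)\otimes A\cong\dirlim\bigl(C(X_j)\otimes A_{r(j)}\bigr)$ for a strictly increasing sequence $r(j)$ chosen so that $\min_l\rank(p_{r(j),l})\geq j\dim(X_j)$; this forces $\max_l \dim(X_j)/\rank(p_{r(j),l})\leq 1/j\to 0$ while the other term still has limit superior at most the dimension-rank ratio of the chosen decomposition. This finite-dimensional inverse-limit approximation of $X$, interleaved diagonally with the building blocks of $A$, is the one idea your sketch is missing; as written, your argument proves the statement only for $\dim(X)<\infty$.

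Your secondary worry, that the decomposition nearly realizing $\drr(A)$ must simultaneously witness large matrix sizes, is a legitimate point: the diagonal choice of $r(j)$ above requires $\limsup_k\min_l\rank(p_{k,l})=\infty$ for the decomposition at hand, and the paper is in fact no more explicit about this than you are (it starts from ``an arbitrary decomposition'' and then makes that choice). So flagging it and proposing an interleaving/telescoping repair is reasonable, but it does not substitute for the missing approximation of $X$, which is the step that actually fails in your write-up.
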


\begin{proof}
First we show
$ \drr \big(C(X) \otimes A \big) \leq \drr (A)$.
We may assume $\drr (A) < \infty$ and  $X= \invlim X_j$ with
$\dim (X_j) < \infty$ for all $j$ (by  Corollary~5.2.6 of \cite{SkiK}).
Suppose $\dirlim A_k$ is an arbitrary decomposition for $A$, where
\[
A_k  = \bigoplus_{l=1}^{m_k}
 p_{k,l} \big ( C(Y_{k,l})\otimes \cK  \big) p_{k,l}
 \]
for compact Hausdorff spaces $Y_{k,l}$, projections
 $p_{k,l} \in  C(Y_{k,l})\otimes \cK $,
  and  $m_k \in \N$. Set
\begin{equation}
 r = \limsup \limits_{k\to \infty} \max_{1 \leq l \leq m_k}
 \left ( \frac{\dim (Y_{k,l}) }{\rank ( p_{k,l}) }\right )
\end{equation}
which is a nonnegative real number or $\infty$.
Choose a strictly increasing sequence $r(j)$ such that, for all $j$,
\begin{equation}\label{Eq1.19.09.3}
\min_{1 \leq l \leq m_{r(j)} } \left( \rank (p_{r(j),l}) \right)\geq j \dim (X_j).
\end{equation}
Clearly, $C(X) \otimes A  \cong \dirlim C(X_j)  \otimes A_{r(j)} $.
 Using Lemma~\ref{Prp2.2.Tom06}(\ref{Prp2.2.Tom06.b}) at the second step,
using \Rmk{Rmk_190903} at the third step,
and using
\[
\dim \big( X_j \times Y_{r(j),l}\big)  \leq \dim (Y_{r(j),l}) +  \dim (X_j),
\]
the fourth step,
 we get, for every $j$,
\begin{align*}
&
\drr \left( C(X_j) \otimes A_{r(j)} \right)
\\
&\hspace*{2em} {\mbox{}} =
\drr \left(\bigoplus_{l=1}^{m_{r(j)}}
 (1_{C(X_j)} \otimes  p_{r(j),l}  ) \big ( C( X_j \times Y_{r(j),l})\otimes \cK  \big) (1_{C(X_j)} \otimes p_{r(j),l})  \right)
 \\
&\hspace*{2em} {\mbox{}} \leq
 \max_{1 \leq l \leq m_{r(j)}}
\left(
 \drr \Big(
 (1_{C(X_j)} \otimes p_{r(j),l}) \left( C( X_j \times Y_{r(j),l})\otimes \cK  \right) (1_{C(X_j)} \otimes  p_{r(j),l})\Big)
 \right)
\\
&\hspace*{2em} {\mbox{}} \leq
 \max_{1 \leq l \leq m_{r(j)} } \left( \frac{\dim  \left(X_j \times Y_{r(j),l}\right)}{\rank (p_{r(j),l})} \right)
\\
&\hspace*{2em} {\mbox{}} \leq
\max_{1 \leq l \leq m_{r(j)} }  \left(\frac{\dim (X_j) +  \dim (Y_{r(j),l})  }{ \rank (p_{r(j),l})} \right)
\\
&\hspace*{2em} {\mbox{}} \leq
\max_{1 \leq l \leq m_{r(j)}} \left( \frac{\dim (X_j) }{ \rank (p_{r(j),l}) } \right)
+
\max_{1 \leq l \leq m_{r(j)} } \left( \frac{\dim (Y_{r(j),l})}{ \rank (p_{r(j),l})} \right).
\end{align*}
Therefore, using this relation at the second step,
 using Lemma~\ref{Prp2.2.Tom06}(\ref{Prp2.2.Tom06.d}) at the first step,
and (\ref{Eq1.19.09.3}) at the fourth step,
\begin{align*}
&\drr \big(C(X)\otimes A\big)
\\
&\hspace*{3em} {\mbox{}} \leq
\liminf_{j \to \infty} \drr \left(C(X_j) \otimes A_{r(j)}\right)
\\
&\hspace*{3em} {\mbox{}} \leq
\liminf_{j \to \infty}
\left(
\max_{1 \leq l \leq m_{r(j)}} \left( \frac{\dim (X_j) }{ \rank (p_{r(j),l} )} \right)
+
\max_{1 \leq l \leq m_{r(j)} } \left( \frac{\dim (Y_{r(j),l})}{ \rank (p_{r(j),l})} \right)
\right)
\\
&\hspace*{3em} {\mbox{}} \leq
\limsup_{j \to \infty}
\left(
\max_{1 \leq l \leq m_{r(j)}} \left( \frac{\dim( X_j) }{ \rank (p_{r(j),l}) } \right)
\right)
+
\limsup_{j \to \infty}
\left(
\max_{1 \leq l \leq m_{r(j)} } \left( \frac{\dim (Y_{r(j),l})}{ \rank (p_{r(j),l})} \right)
\right)
\\
&\hspace*{3em} {\mbox{}} \leq
\lim_{j \to \infty} \frac{1}{j}
+
\limsup_{j \to \infty} \left(\max_{1 \leq l \leq m_{r(j)} } \left( \frac{\dim (Y_{r(j),l})}{ \rank (p_{r(j),l})} \right) \right)
 = r.
\end{align*}
Using this and taking the  infimum over all $r$ with
$r= \limsup \limits_{k\to \infty} \max_{1 \leq l \leq m_k}
 \left ( \frac{\dim (Y_{k,l}) }{\rank (p_{k,l})}\right )$, we get
\[
 \drr \big(C (X) \otimes A \big) \leq \drr (A).
\]
To show
$\drr (A) \leq \drr \big(C(X) \otimes  A\big)$,
let $x_0 \in X$. Define a surjective homomorphism
$\varphi \colon C (X, A) \to A$
 by $\varphi (f)= f(x_0)$.
 Using this at the first step and using Lemma~\ref{Prp2.2.Tom06}(\ref{Prp2.2.Tom06.a}) at the second step,
  we get
\[
\drr (A) = \drr \left( C(X, A) / \textrm{ker} \varphi \right) \leq  \drr \left(C(X) \otimes  A\right).
\]
This completes the proof.
\end{proof}
There is an example in which
 $A$ doesn't have large matrix sizes and the equality
$\drr (A) = \drr \big(C(X)\otimes A\big)$ fails. For example, it is enough to take $A= C([0, 1])$ and $X=[0, 1]$.

As we  promised, we introduce a class of AH algebras $A$
with $\rc (C(X)\otimes  A) = \rc ( A )$ in the following theorem.
\begin{thm}\label{Thm_rc_drr}
Let $A$
be a unital AH algebra with large matrix sizes and let $X$ be a compact metric space.
Suppose $\rc (A) = \frac{1}{2}\drr (A)$.
 Then:
\begin{enumerate}
\item \label{L1_rc_drr.a}
$\rc ( C(X) \otimes A   ) = \frac{1}{2} \drr (C(X) \otimes A)$.
\item \label{L1_rc_drr.b}
$\rc (C(X)\otimes  A) = \rc ( A )$.
\end{enumerate}
\end{thm}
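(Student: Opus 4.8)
The plan is to deduce the theorem by chaining together results already established in the paper, the only real preliminary point being that $C(X)\otimes A$ is again a unital AH algebra. Since every unital AH algebra is (residually) stably finite, as used in the proof of Lemma~\ref{rc_half_drr}, Proposition~\ref{Prp.20199.02.17} applies to $A$ and gives
\[
\rc(A)\ \leq\ \rc\big(C(X)\otimes A\big).
\]

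Next I would check that $C(X)\otimes A$ is a unital AH algebra, so that Lemma~\ref{rc_half_drr} can be applied to it as well. Starting from any decomposition $A=\dirlim(A_k,\psi_k)$ with $A_k=\bigoplus_{l=1}^{m_k}p_{k,l}\big(C(Y_{k,l})\otimes\cK\big)p_{k,l}$, tensoring with $C(X)$ yields $C(X)\otimes A=\dirlim\big(C(X)\otimes A_k,\ \id_{C(X)}\otimes\psi_k\big)$ with unital connecting maps, and
\[
C(X)\otimes A_k\ \cong\ \bigoplus_{l=1}^{m_k}\big(1_{C(X)}\otimes p_{k,l}\big)\big(C(X\times Y_{k,l})\otimes\cK\big)\big(1_{C(X)}\otimes p_{k,l}\big),
\]
where each $1_{C(X)}\otimes p_{k,l}$ is a projection of constant rank $\rank(p_{k,l})$. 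Hence each $C(X)\otimes A_k$ is strictly semihomogeneous, so $C(X)\otimes A$ is a unital AH algebra and Lemma~\ref{rc_half_drr} gives $\rc\big(C(X)\otimes A\big)\leq\tfrac{1}{2}\drr\big(C(X)\otimes A\big)$. I would then invoke Proposition~\ref{Prp_drr_stabl} — this is the step where the hypothesis that $A$ has large matrix sizes is used — to obtain $\drr\big(C(X)\otimes A\big)=\drr(A)$, and combine it with the standing assumption $\rc(A)=\tfrac{1}{2}\drr(A)$.

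Putting everything together yields the chain
\[
\rc(A)\ \leq\ \rc\big(C(X)\otimes A\big)\ \leq\ \tfrac{1}{2}\drr\big(C(X)\otimes A\big)\ =\ \tfrac{1}{2}\drr(A)\ =\ \rc(A),
\]
so every inequality here is an equality. Equality of the second and third terms is part~(\ref{L1_rc_drr.a}), and equality of the first and second terms is part~(\ref{L1_rc_drr.b}). I do not anticipate a genuine obstacle: the sole point requiring any care is confirming that $C(X)\otimes A$ meets the definition of a unital AH algebra (which reduces to the constant-rank observation about $1_{C(X)}\otimes p_{k,l}$ above), and everything else is a direct appeal to Propositions~\ref{Prp.20199.02.17} and~\ref{Prp_drr_stabl} together with Lemma~\ref{rc_half_drr}.
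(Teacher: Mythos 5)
Your proposal is correct and follows essentially the same route as the paper: both arguments chain Proposition~\ref{Prp.20199.02.17}, Lemma~\ref{rc_half_drr} applied to $C(X)\otimes A$, Proposition~\ref{Prp_drr_stabl}, and the hypothesis $\rc(A)=\tfrac12\drr(A)$ into the same cycle of inequalities. Your explicit check that $C(X)\otimes A$ is again a unital AH algebra (via the constant rank of $1_{C(X)}\otimes p_{k,l}$) is a detail the paper leaves implicit, and it is a welcome addition rather than a departure.
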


\begin{proof}
We prove (\ref{L1_rc_drr.a}). Using Proposition~\ref{Prp_drr_stabl} at the first step,
using Proposition~\ref{Prp.20199.02.17} at the third step, and using Lemma~\ref{rc_half_drr} at the last step,
we get
\[
\frac{1}{2} \drr (C(X) \otimes A)
=
\frac{1}{2} \drr ( A)
=
\rc (A)
\leq
\rc ( C(X) \otimes A ) \leq \frac{1}{2} \drr (C(X) \otimes A).
\]
This relation implies $\rc ( C(X) \otimes A ) = \frac{1}{2} \drr (C(X) \otimes A)$.

We prove (\ref{L1_rc_drr.b}). Using Part (\ref{L1_rc_drr.a}) at the first step and
using Proposition~\ref{Prp_drr_stabl} at the second step, we get
\[
\rc ( C(X) \otimes A )
=
\frac{1}{2} \drr (C(X) \otimes A)
=
\frac{1}{2} \drr ( A)
=
\rc (A).
\]
This completes the proof.
\end{proof}
One should ask whether there exists  an AH algebra satisfying the hypotheses of Theorem~\ref{Thm_rc_drr}.
To see that there is such an AH algebra,
we give  a simple unital AH algebra~$A$
with large matrix sizes, stable rank one, nonzero radius of comparison, and
$\rc(A)=\frac{1}{2} \drr(A)$.
The example was recently constructed in Section~6 of \cite{AGP19}.
\begin{eme}\label{L_rc_drr}
Let $X$ be a compact metric space and
let $A= \dirlim A_n$ be the AH algebra which was constructed in Section~6 of \cite{AGP19}.
Adopt the assumptions and notation of Section~6 of \cite{AGP19}.
Then:
\[
\rc (A) = \frac{1}{2} \drr (A), \quad
\rc ( C(X) \otimes A   ) = \frac{1}{2} \drr (C(X) \otimes A),
\quad \text{and} \quad
\rc (C(X)\otimes  A) = \rc ( A ).
\]
By Lemma~\ref{rc_half_drr} and Theorem~\ref{Thm_rc_drr},
it suffices to show  $\drr (A) \leq 2 \, \rc (A)$.
Using Lemma~\ref{Prp2.2.Tom06}(\ref{Prp2.2.Tom06.d}) at the first step,
using Lemma~\ref{Prp2.2.Tom06}(\ref{Prp2.2.Tom06.b}) and
\Rmk{Rmk_190903} at the third step, and using Theorem~6.15 of \cite{AGP19} at the last step,
we get
\begin{align*}
\drr (A)
 &\leq \liminf_{n \to \infty} \drr ( A_n)
=
\liminf_{n \to \infty}
\drr \left( M_{r(n)} \left(C(X_n) \oplus C(X_n)\right) \right)
\\
&\leq
\liminf_{n \to \infty} \frac{\dim (X_n)}{r(n)}
=
\lim_{n \to \infty}
\frac{2 s(n)}{r(n)} = 2 \, \rc (A),
\end{align*}
as desired.
\end{eme}
Corollary~\ref{Cor_AH_Cons1} provides more examples
to feed into Theorem~\ref{Thm_rc_drr}.
\begin{cor}\label{Cor_AH_Cons2}
For every $r \in [0, \infty)$ and for every compact metric space $X$, there exists a simple unital AH algebra such that 
$\rc (C(X)\otimes  A)=\rc (A)=r$.
\end{cor}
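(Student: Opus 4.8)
The plan is to deduce this from Corollary~\ref{Cor_AH_Cons1} and Theorem~\ref{Thm_rc_drr}. Fix $r \in [0, \infty)$. By Corollary~\ref{Cor_AH_Cons1} --- equivalently, by Proposition~6.8 of \cite{Tom06} combined with Lemma~\ref{rc_half_drr} --- there is a simple unital AH algebra $A$ with $\rc(A) = \tfrac{1}{2}\drr(A) = r$. The first thing I would do is observe that this $A$ may be chosen to have large matrix sizes: the AH algebras furnished by Proposition~6.8 of \cite{Tom06} are built as direct limits $\varinjlim M_{k_n}\big(C(Y_n)\big)$ in which the matrix sizes $k_n$ tend to infinity (for $r > 0$ one has $\dim(Y_n)/k_n \to 2r$ with $\dim(Y_n) \to \infty$, which forces $k_n \to \infty$; for $r = 0$ one may harmlessly enlarge the matrix sizes along the system), so the defining decomposition already meets the large-matrix-sizes requirement of Definition~\ref{DPT09.Def.2.2}.

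With this in hand, $A$ satisfies every hypothesis of Theorem~\ref{Thm_rc_drr}, and part~(\ref{L1_rc_drr.b}) of that theorem gives $\rc\big(C(X)\otimes A\big) = \rc(A) = r$ for every compact metric space $X$. Since $A$ is a simple unital AH algebra, this is exactly the assertion.

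The only content here beyond direct citation is the verification that the example underlying Corollary~\ref{Cor_AH_Cons1} can be arranged to have large matrix sizes, so this is the step I would treat most carefully; everything else is an application of results already established. As a consistency check, the case $r = 0$ can be handled without invoking Theorem~\ref{Thm_rc_drr}: choose any simple unital AH algebra $A$ of slow dimension growth with matrix sizes tending to infinity, so that $\drr(A) = 0$ and hence $\rc(A) = 0$ by Lemma~\ref{rc_half_drr}; then $\drr\big(C(X)\otimes A\big) = \drr(A) = 0$ by Proposition~\ref{Prp_drr_stabl}, whence $\rc\big(C(X)\otimes A\big) \le \tfrac{1}{2}\drr\big(C(X)\otimes A\big) = 0$ again by Lemma~\ref{rc_half_drr}, and combining with Proposition~\ref{Prp.20199.02.17} yields $\rc\big(C(X)\otimes A\big) = \rc(A) = 0$.
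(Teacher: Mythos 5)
Your proposal is correct and follows the paper's intended route exactly: feed the algebra supplied by Corollary~\ref{Cor_AH_Cons1} into Theorem~\ref{Thm_rc_drr}(\ref{L1_rc_drr.b}). You go slightly beyond the paper by explicitly verifying the large-matrix-sizes hypothesis for the examples of Proposition~6.8 of \cite{Tom06} (and by handling $r=0$ separately), a detail the paper leaves implicit; that verification is sound.
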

\section{Open problem}\label{Sec_Q}

The most obvious problem is whether
the right-hand side of
inequality (\ref{Eq_Con_191116}) holds for any stably finite unital C*-algebra $A$
and any compact metric space $X$.

\begin{qst}\label{Q_191215}
Let $A$ be a unital stably finite  \ca{} and let $X$ be a compact metric space.
Does it follow that
\[\rc \left( C(X) \otimes A  \right) \leq \rc (A) + \frac{1}{2}\dim (X) +1?
\]
\end{qst}

\end{document}